\batchmode
\makeatletter
\def\input@path{{"/Users/russw/Documents/Research/mypapers/Antichain cutsets in real-ranked lattices/"}}
\makeatother
\documentclass[12pt,oneside,english,lowtilde]{amsart}
\usepackage[T1]{fontenc}
\usepackage[latin9]{inputenc}
\usepackage{babel}
\usepackage{url}
\usepackage{enumitem}
\usepackage{amstext}
\usepackage{amsthm}
\usepackage{amssymb}
\usepackage{graphicx}
\usepackage{geometry}
\geometry{verbose,tmargin=3cm,bmargin=3cm,lmargin=3cm,rmargin=3cm}
\usepackage[dvips,pdfusetitle,
 bookmarks=true,bookmarksnumbered=false,bookmarksopen=false,
 breaklinks=false,pdfborder={0 0 1},backref=false,colorlinks=false]
 {hyperref}
\usepackage{breakurl}

\makeatletter
\numberwithin{equation}{section}
\numberwithin{figure}{section}
\theoremstyle{plain}
\newtheorem{thm}{\protect\theoremname}[section]
\theoremstyle{plain}
\newtheorem{question}[thm]{\protect\questionname}
\theoremstyle{plain}
\newtheorem{cor}[thm]{\protect\corollaryname}
\theoremstyle{definition}
\newtheorem{example}[thm]{\protect\examplename}
\theoremstyle{plain}
\newtheorem{lem}[thm]{\protect\lemmaname}
\theoremstyle{remark}
\newtheorem{rem}[thm]{\protect\remarkname}

\usepackage{lmodern}

\setlength\intextsep{.5em}

\makeatother

\providecommand{\corollaryname}{Corollary}
\providecommand{\examplename}{Example}
\providecommand{\lemmaname}{Lemma}
\providecommand{\questionname}{Question}
\providecommand{\remarkname}{Remark}
\providecommand{\theoremname}{Theorem}

\begin{document}
\global\long\def\cc{\mathbb{C}}%

\global\long\def\rr{\mathbb{R}}%

\global\long\def\qq{\mathbb{Q}}%

\global\long\def\ff{\mathbb{F}}%

\global\long\def\link{\operatorname{link}}%

\title{Antichain cutsets in real-ranked lattices}
\author{Stephan Foldes and Russ Woodroofe}
\thanks{Work of the second author is supported in part by the Slovenian Research
Agency (research program P1-0285 and research projects J1-9108, N1-0160,
J1-2451).}
\address{Miskolci Egyetem, 3515 Miskolc-Egyetemvaros, Hungary}
\email{foldes.istvan@uni-miskolc.hu}
\address{Univerza na Primorskem, Glagoljaška 8, 6000 Koper, Slovenia}
\email{russ.woodroofe@famnit.upr.si}
\urladdr{\url{https://osebje.famnit.upr.si/~russ.woodroofe/}}
\begin{abstract}
We show that in a rank supersolvable lattice that is graded by a bounded
real interval, any antichain cutset is a level set for some appropriately
constructed grading. As a consequence, given an antichain cutset in
any of the measurable Boolean lattice, a continuous partition lattice,
or a continuous projective geometry, we may find a grading in which
the cutset is a level set.
\end{abstract}

\maketitle

\section{\label{sec:Introduction}Introduction}

The Boolean lattice on a finite set is a basic example in lattice
theory, with almost any nice property that one could ask for. The
Boolean lattice on an infinite set, on the other hand, is much less
well-behaved. For example, it is difficult or impossible to extend
gradedness to this situation; in particular, ordinal theory gives
that there are many isomorphism classes of maximal chains. Better-behaved
is the \emph{measurable Boolean lattice} on a complete measure space
$(X,\mu)$ with a finite measure. In this lattice, we restrict ourselves
to the measurable subsets of $X$, and consider two measurable subsets
to be equivalent if their symmetric difference has measure zero. If
the measure does not have any points of positive measure (or, in the
other extreme, if $\mu$ is the counting measure), then $\mu$ gives
an analogue of a rank function.

The situation with the partition lattice on an infinite set is similar:
the full lattice of partitions on $[0,1]$ is too large to admit a
nice rank function, and the same holds even for the lattice of partitions
into measurable parts. But Von Neumann in \cite{VonNeumann:1936a,VonNeumann:1936b}
developed machinery to build continuous extensions of lattices of
subspaces and similar families of modular lattices. Indeed, the measurable
Boolean lattice on the interval $[0,1]$ can be viewed as an example
of Von Neumann's construction. Björner in \cite{Bjorner:1987} extended
the machinery of Von Neumann to geometric and semimodular lattices.
A main application of Björner's extension was a continuous extension
of finite partition lattices. Haiman in \cite{Haiman:1994} gave a
more concrete construction of a continuous partition lattice, built
from measurable sets in a similar manner to the measurable Boolean
lattice. Björner's lattice embeds in that of Haiman.

The current paper came out of an effort to extend some of our earlier
work on antichain cutsets for discrete posets \cite{Foldes/Woodroofe:2013}
to the continuous situation, covering (at least) all of the examples
we have so far discussed. An \emph{antichain cutset} in a lattice
$L$ is an antichain $A$ so that every maximal chain nontrivially
intersects $A$. Antichain cutsets were first studied by Rival and
Zaguia \cite{Rival/Zaguia:1985}. The structure of antichain cutsets
is much more prescribed than that of antichains more generally. For
example, the family of antichain cutsets of a graded, finite, and
strongly connected poset coincides exactly with that of the level
sets of the poset \cite[Theorem 2]{Foldes/Woodroofe:2013}. We contrast
with the situation for general antichains, where e.g. a finite partition
lattice typically has antichains that are larger than any level set
\cite{Graham:1978}.

Unlike the finite situation, where a rank function may be recovered
in a straightforward way from the poset structure, the continuous
lattices that we are interested in may admit gradings that are quite
different from one another. For example, let $L$ be the measurable
Boolean lattice on $[0,1]$ with Lebesgue measure $\mu$. Then $\mu$
is a rank function (see Section~\ref{sec:Background} for a definition),
and given a measurable positive function $f$, the function $\nu(U)=\int_{U}f\,d\mu$
yields another rank function. The level sets of these two rank functions
are generally quite different from each other. Since a level set of
any rank function is necessarily an antichain cutset, it is too much
to hope that every antichain cutset is a level set in some chosen
rank function.

We believe that the following question is reasonable to ask, and of
some interest.
\begin{question}
\label{que:ACClevel}For which classes of graded posets does it hold
that, for each antichain cutset $A$, there is a grading $\sigma$
(possibly distinct from an initially presented one) so that $A$ is
a level set under $\sigma$?
\end{question}

The first author showed in \cite{Foldes:2024} that the condition
asked for in Question~\ref{que:ACClevel} holds for space-time causality
orders.

In the current paper, we will show that the condition of this question
holds for a reasonably general class of lattices, defined as follows.
Let $L$ be a lattice with an understood rank function $\rho:L\to R$,
for $R$ some subset of $\rr$, or more generally of the extended
reals $[-\infty,\infty]$. We say that an element $m$ of $L$ is
\emph{rank modular} if for every element $x$ of the lattice, it holds
that $\rho(x\vee m)+\rho(x\wedge m)=\rho(x)+\rho(m)$. Equivalently,
the same relation holds whenever $x$ is incomparable with $m$. We
say that $L$ is \emph{rank supersolvable} if it has a maximal chain
consisting of rank modular elements (a \emph{chief chain}).

In the case of a finite lattice with the natural rank function, rank
supersolvability is one of several equivalent conditions for supersolvability
\cite{Foldes/Woodroofe:2021}. We do not know if these conditions
are equivalent in the non-discrete situation; see also Lemma~\ref{lem:ElmSS}
and the subsequent discussion.

Our main result is as follows.
\begin{thm}
\label{thm:Main}Let $R\subseteq\rr$ be a bounded interval. If $L$
is a rank supersolvable lattice with respect to the $R$-grading $\rho$,
and $A$ is an antichain cutset for $L$, then there is some grading
$\sigma$ under which $A$ is a level set.
\end{thm}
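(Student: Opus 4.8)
The plan is to build the grading $\sigma$ by integrating the given rank function against a suitable weight along a chief chain --- a continuous analogue of using the natural edge-labelling of a finite supersolvable lattice. First I normalize: replacing $\rho$ by an affine function (which changes neither the lattice, nor the grading axioms, nor the notion of antichain cutset), I may assume $R=[0,1]$ and $\rho(\hat{0})=0$, $\rho(\hat{1})=1$. Fix a chief chain $C=\{c_{t}:t\in[0,1]\}$ with $\rho(c_{t})=t$. Since $A$ is a cutset it meets $C$, and since $A$ is an antichain it meets $C$ in a single element $m_{0}=c_{t_{0}}$; the cases $t_{0}\in\{0,1\}$ are degenerate, so I assume $t_{0}\in(0,1)$. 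The consequence of the cutset hypothesis that I will use is that \emph{every} $x\in L$ is comparable to some element of $A$, obtained by extending $x$ to a maximal chain and using that this chain meets $A$. In particular, once $\sigma$ is strictly order-preserving with $\sigma|_{A}$ equal to a constant $\gamma$, it follows automatically that $\sigma^{-1}(\gamma)=A$, since any $x\notin A$ lies strictly below, or strictly above, some member of $A$. So it is enough to produce a grading that is constant on $A$, and I aim for the constant value $t_{0}$.

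For $x\in L$ put $g_{x}(t)=\rho(x\wedge c_{t})$. Rank modularity of the $c_{t}$ is exactly what makes the $g_{x}$ well behaved: it yields $\rho(x\vee c_{t})=\rho(x)+t-g_{x}(t)$, whence $g_{x}$ is nondecreasing and $1$-Lipschitz with $g_{x}(0)=0$, $g_{x}(1)=\rho(x)$, and $\|g_{y}-g_{x}\|_{\infty}\le\rho(y)-\rho(x)$ for $x\le y$; one should picture $x\mapsto g_{x}$ as recording how much of each ``edge'' of the chief chain lies under $x$. For a positive weight $f$ on $[0,1]$ set
\[
  \sigma_{f}(x)=\int_{[0,1]}f\,dg_{x},
\]
a Stieltjes integral against the increasing function $g_{x}$. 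For $f\equiv1$ this is just $\rho$, while in the measurable Boolean lattice with $c_{t}=[0,t]$ it returns the familiar gradings $U\mapsto\int_{U}f\,d\mu$; this is precisely the flexibility beyond reparametrizations of $\rho$ that one needs, since $\rho$ itself is generally not constant on $A$. I would then verify that, for $f$ in a suitable class of positive weights (those that are monotone, or more generally do not decrease too much; in the measurable Boolean lattice, all positive $f$), the function $\sigma_{f}$ is again a grading of $L$: strict order-preservation follows from the Lipschitz estimate and $g_{y}(1)-g_{x}(1)=\rho(y)-\rho(x)>0$; $\sigma_{f}(\hat{0})=0$ and $\sigma_{f}(\hat{1})=\int_{0}^{1}f$; and interval-surjectivity holds because $g_{z}$, hence $\sigma_{f}(z)$, varies continuously as $z$ runs along a maximal chain, so the intermediate value theorem applies on every interval $[x,y]$.

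The crux is to choose $f$ so that $\sigma_{f}$ is constant on $A$. Writing $\mu_{a}$ for the positive Stieltjes measure $dg_{a}$, this requires $\int f\,d\mu_{a}$ to be independent of $a$, i.e.\ that $f$ annihilate every difference $\mu_{a}-\mu_{a'}$. A Hahn--Banach separation argument produces a strictly positive such $f$ --- and, with further care, one in the admissible class above --- provided the closed span of $\{\mu_{a}-\mu_{a'}:a,a'\in A\}$ contains no nonzero measure of one sign; equivalently, provided that whenever $\lambda_{1},\dots,\lambda_{n}\in\rr$ satisfy $\sum_{k}\lambda_{k}=0$ and $t\mapsto\sum_{k}\lambda_{k}\,\rho(a_{k}\wedge c_{t})$ is monotone, this function is constant. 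Here both hypotheses are essential: the cutset condition controls the position of each $a\in A$ relative to $C$ --- no $c_{t}$ with $t\ge t_{0}$ lies below $a$, and no $c_{t}$ with $t\le t_{0}$ lies above $a$ --- while rank modularity governs how $\rho(a\wedge c_{t})$ evolves with $t$, and together they should forbid a nontrivial monotone zero-sum combination, since such a combination would force some $a_{k}$ to lie below a join, or above a meet, of the other $a_{j}$, contradicting that $A$ is an antichain cutset. I expect this separation statement --- possibly requiring a careful choice of the chief chain so that it holds --- to be the main obstacle; granting it, the verification that $\sigma_{f}$ is a genuine grading and the deduction that $A=\sigma_{f}^{-1}(t_{0})$ is a level set are comparatively routine.
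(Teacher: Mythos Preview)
Your proposal does not complete the proof. The step you yourself flag as ``the main obstacle'' --- producing a strictly positive weight $f$ with $\int f\,d\mu_{a}$ independent of $a\in A$ via a Hahn--Banach separation --- is left as an assertion, and the heuristic you offer (that a nontrivial monotone zero-sum combination $\sum\lambda_{k}g_{a_{k}}$ would force some $a_{k}$ below a join or above a meet of the others) is not an argument: nothing in rank modularity converts pointwise inequalities between the $g_{a_{k}}$ into order relations among the $a_{k}$ themselves. Even granting the separation condition, Hahn--Banach would in general deliver only a nonnegative $f$, and extracting a strictly positive one in your ``admissible class'' is a further nontrivial step you have not addressed. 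There is also reason to doubt that your family $\{\sigma_{f}\}$ is large enough in principle: in the measurable Boolean lattice with chief chain $c_{t}=[0,t]$ your gradings are exactly the absolutely continuous measures $U\mapsto\int_{U}f\,d\mu$, all of which are modular; the paper's own closing example exhibits an antichain cutset for which the grading it constructs is \emph{not} modular, so you would be proving something strictly stronger than what is claimed, without evidence that the stronger statement holds.

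The paper's route is entirely different and far more direct. For each $z$ one forms the ``good chain'' $\{z\wedge m_{\lambda}:\lambda\in R\}\cup\{z\vee m_{\lambda}:\lambda\in R\}$ through $z$ along the chief chain; the Lipschitz estimates (equivalent to the ones you derived for $g_{x}$) show this chain hits every rank, hence is maximal, hence meets $A$ in a unique element $\alpha(z)$. One then simply sets
\[
\sigma(z)=\rho(z)-\rho(\alpha(z)).
\]
Constancy on $A$ is automatic since $\alpha(a)=a$, and the remaining work is to verify that $\sigma$ is strictly increasing (via the rank-modular inequalities comparing $\rho(m_{\kappa}\wedge z)-\rho(m_{\kappa}\wedge w)$ with $\rho(z)-\rho(w)$) and that $\rho\circ\alpha$ varies continuously along any maximal chain, so that $\sigma$ is surjective onto $[\sigma(\hat{0}),\sigma(\hat{1})]$. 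No weight, no functional-analytic separation, and no restriction to a special family of gradings is needed.
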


Under an additional continuity condition, we can extend Theorem~\ref{thm:Main}
to unbounded intervals. A precise statement of this extension may
be found in Theorem~\ref{thm:MainExt} below.
\begin{cor}
For any antichain cutset $A$ in
\begin{enumerate}
\item the measurable Boolean lattice over $[0,1]$ with the Lesbesgue measure,
or
\item the continuous projective geometries of Von Neumann, or
\item the continuous partition lattice of either Björner or Haiman,
\end{enumerate}
there is some grading $\sigma$ under which $A$ is a level set.
\end{cor}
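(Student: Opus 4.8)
The plan is to derive the corollary from Theorem~\ref{thm:Main}, using its extension Theorem~\ref{thm:MainExt} should the natural rank function on one of the three lattices be more conveniently taken in a non-compact interval. In each case it suffices to produce a rank function valued in a bounded real interval (respectively, one satisfying the continuity hypothesis of Theorem~\ref{thm:MainExt}) together with a chief chain, so that the lattice is rank supersolvable; the conclusion then follows by applying the relevant theorem to the given antichain cutset $A$.

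For~(1), the measurable Boolean lattice over $[0,1]$ is graded by Lebesgue measure $\mu$, with values in the bounded interval $[0,1]$. Since join and meet are computed by union and intersection of representatives, additivity of measure yields $\mu(x\vee m)+\mu(x\wedge m)=\mu(x\cup m)+\mu(x\cap m)=\mu(x)+\mu(m)$ for \emph{all} $x$ and $m$, so every element is rank modular; hence any maximal chain, which exists by Zorn's lemma, is a chief chain. For~(2), a continuous projective geometry of Von Neumann is a complemented modular lattice whose dimension function $d$ is a rank function valued in $[0,1]$ with $d(x\vee y)+d(x\wedge y)=d(x)+d(y)$ for all $x,y$; modularity again forces every element to be rank modular, and a maximal chain exists by Zorn's lemma. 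In both cases the lattice is rank supersolvable with a bounded grading, so Theorem~\ref{thm:Main} applies verbatim.

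Case~(3) requires genuine work, as the partition lattices of Bj\"orner and Haiman are only semimodular and a typical element is not rank modular. Here I would transcribe the chief chain that witnesses supersolvability of the finite partition lattice $\Pi_n$, namely the chain of partitions whose single non-trivial block is $\{1,\dots,i+1\}$ for $i=0,\dots,n-1$; these are precisely the partitions of $\Pi_n$ with at most one non-trivial block, which are exactly its rank modular elements. In Haiman's model, whose elements are measurable partitions of $[0,1]$ modulo null sets, I would fix an order-theoretically maximal chain $\mathcal{X}=\{X_t\}$ in the measurable Boolean lattice and let $\pi_t$ be the partition whose only non-trivial block is $X_t$. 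A short argument shows that $\{\pi_t\}$ is a maximal chain of the partition lattice: a partition comparable to every $\pi_t$ can have at most one non-trivial block, and is therefore of the form $\pi_Y$ with $Y$ comparable to every $X_t$, hence on the chain by maximality of $\mathcal{X}$. Bj\"orner's lattice would be handled the same way in its explicit realization, or by assembling the chief chains of the $\Pi_n$ along compatibly chosen connecting maps of the Von Neumann--Bj\"orner construction.

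The principal obstacle is to verify that each $\pi_t$ is rank modular in the continuous partition lattice. This reduces to showing that, for an arbitrary measurable partition $\tau$, the rank behaves additively under $\tau\mapsto\tau\wedge\pi_t$, which restricts $\tau$ to $X_t$ and leaves the rest as singletons, and $\tau\mapsto\tau\vee\pi_t$, which merges into $X_t$ every block of $\tau$ meeting it --- mirroring the one-line identity $\operatorname{rank}(\tau\wedge\pi)+\operatorname{rank}(\tau\vee\pi)=\operatorname{rank}(\tau)+\operatorname{rank}(\pi)$ for a single-block partition $\pi$ in $\Pi_n$. The work is to carry this out against the precise rank function of Bj\"orner's and Haiman's constructions, where the count of blocks of $\tau$ meeting $X_t$ must be replaced by a measure-theoretic quantity and the attendant normalization and limit issues controlled; I would also check that the resulting grading is bounded (or meets the continuity hypothesis), so that Theorem~\ref{thm:Main} (or Theorem~\ref{thm:MainExt}) applies and furnishes the grading $\sigma$ making $A$ a level set.
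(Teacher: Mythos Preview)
Your overall strategy matches the paper's: verify that each lattice is rank supersolvable with a bounded $[0,1]$-grading, then invoke Theorem~\ref{thm:Main}. Cases~(1) and~(2) are handled exactly as in the paper (every element is rank modular, so any maximal chain is a chief chain), and since all three lattices carry natural $[0,1]$-gradings, the extension Theorem~\ref{thm:MainExt} is never needed.

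Where you diverge is in case~(3). The paper does not prove rank supersolvability of the continuous partition lattices directly; it simply cites \cite[Theorem~2]{Bjorner:1987} for Bj\"orner's lattice and \cite[Lemma~2.15]{Haiman:1994} for Haiman's. Your sketch of a direct argument --- building the chief chain from single-block partitions $\pi_t$ indexed by a maximal chain $\{X_t\}$ in the measurable Boolean lattice and then verifying the rank modular identity --- is the right intuition and essentially reconstructs what those references do, but by your own account you leave the crucial verification (that the rank identity survives the measure-theoretic normalization and the limiting construction) unfinished. That is the only substantive gap: rather than carrying out this analysis, you should appeal to the cited results, which is precisely what the corollary is set up to do after the examples in Section~\ref{sec:Background}.
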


The chief chain of $L$ (with respect to $\rho$) does not necessarily
satisfy the rank modular condition with respect to the grading $\sigma$
in our construction; thus, it is not necessarily rank supersolvable
in the grading $\sigma$. We do not know whether, in the situation
of Theorem~\ref{thm:Main}, it is possible to construct a grading
$\sigma$ where both $A$ is a level set and also rank modularity
of the chief chain is preserved.

In contrast, the (full) Boolean lattice on a infinite set admits only
the trivial antichain cutsets $\{\hat{0}\}$ and $\{\hat{1}\}$. For
a countably infinite set, this is straightforward, as follows. The
subposet of all finite subsets forms a (non-complete) sublattice.
By countability, adjoining $\hat{1}$ to a maximal chain in this sublattice
yields a chain that is maximal in the full Boolean lattice. The subposet
of all subsets with a finite complement is dual, and entirely similar.
It now follows by the main result of \cite{Foldes/Woodroofe:2013}
that any putative nontrivial antichain cutset must contain both all
sets of some size $n$, and likewise also all sets whose complement
has some size $n'$. But this clearly contradicts the antichain property!
In a recent preprint \cite{Ginsburg/Sands:2025UNP}, Ginsburg and
Sands have shown the stronger result that any cutset in the Boolean
lattice over a set of infinite cardinality $\kappa$ must contain
a chain having strictly larger cardinality than $\kappa$.

A recent paper of Engel, Mitsis, Pelekis and Reiher \cite{Engel/Mitsis/Pelekis/Reiher:2020}
considers a somewhat different extension of discrete results on antichains
to a continuous and/or analytic setting. \medskip{}

This paper is organized as follows. In Section~\ref{sec:Background},
we define $R$-gradings, and review the details of the construction
of Von Neumann and Björner. In Section~\ref{sec:RMandSS}, we extend
facts about rank modularity and supersolvability from the discrete
case to the current setting. In Section~\ref{sec:ProofMainThm},
we prove (an extension of) Theorem~\ref{thm:Main}.

\section*{Acknowledgements}

We'd like to thank Andrej Bauer, Sándor Radeleczki, and Alex Simpson
for helpful comments.

\section{\label{sec:Background}Definitions and background}

Let $R$ be a subset of $\rr$, or more generally of the extended
reals $[-\infty,\infty]$. An \emph{$R$-rank function} or \emph{$R$-grading}
for the poset $L$ is a function $L\to R$ that restricts to an order
isomorphism on every maximal chain of $L$. Thus, a finite poset is
graded (in the usual sense) if and only if it has an $[n]$-grading.
\begin{example}
Given a complete measure space $(X,\mu)$, we take the measurable
Boolean lattice $\mathcal{B}(X,\mu)$ to be the set of measurable
sets up to the identification of pairs of sets whose symmetric difference
has measure zero. If $\mu$ is a finite measure with no points of
positive measure, then $\mu:\mathcal{B}(X,\mu)\to[0,\mu(X)]$ is a
$[0,\mu(X)]$-grading. We notice also, on the other hand, that the
rank function on a finite Boolean lattice is exactly the counting
measure on the underlying set.
\end{example}

The measurable Boolean lattice on the interval $[0,1]$ with Lebesgue
measure is a natural continuous analogue of the Boolean lattice on
$n$ points.

Our other examples come from a construction originally due to Von
Neumann for modular lattices \cite{VonNeumann:1936a,VonNeumann:1936b},
and extended by Björner to semimodular lattices in \cite{Bjorner:1987},
see also \cite{Bjorner/Lovasz:1987}. We review this construction.
There are two basic ingredients: a direct limit, and a metric completion.

For the direct limit, we start with a graded lattice $L_{n}$ of rank
$n$ for each positive integer $n$. It will be convenient to require
every $L_{n}$ to be semimodular. We renormalize the standard rank
function on $L_{n}$ by dividing by $n$: thus, the top element $\hat{1}$
has renormalized rank $1$, an element in a middle rank has renormalized
rank of roughly $1/2$, etc.

Whenever $k$ properly divides $n$, we consider a lattice embedding
$\varphi^{k,n}:L_{k}\to L_{n}$ that respects the renormalized rank.
We require that, when $k$ divides $m$ and $m$ divides $n$, it
holds that $\varphi^{k,n}$ agrees with the composition of $\varphi^{k,m}$
and $\varphi^{m,n}.$ Now we take the directed limit $L_{(\infty)}$
of this system of lattices and embeddings over the positive integers
with the divisibility order. In other words, we take a union over
all $L_{n}$, and then identify earlier $L_{k}$'s with sublattices
in later $L_{n}$'s. The renormalized rank function carries through
to give a $([0,1]\cap\qq)$-grading on $L_{(\infty)}$.

We now take a metric completion to get a $[0,1]$-graded lattice.
We need to have a metric for this purpose. It has long been known
that in a semimodular lattice with rank function $r$, the function
$d(x,y)=2r(x\vee y)-r(x)-r(y)$ is a metric \cite{Monjardet:1981}.
Indeed, we may view this as the length of the ``up-down path'' from
$x$ up to $x\vee y$, and from $x\vee y$ back down to $y$. If every
$L_{n}$ is semimodular, then these metrics pass to a metric on $L_{(\infty)}$.
Now $L_{\infty}$ is the metric completion of $L_{(\infty)}$ under
this metric. We recover a rank function by letting $\rho(x)=d(\hat{0},x)$;
by definition of the distance function, this $\rho$ extends the renormalized
rank function on $L_{(\infty)}$.
\begin{example}
Von Neumann defines a continuous projective geometry over a field
$\ff$ \cite[Section 5]{VonNeumann:1936a}; see also \cite[Chapter X.5]{Birkhoff:1967}.
In the language above, this may be described as follows. Let $L_{n}$
be the lattice of all subspaces of the vector space $\ff^{n}$. When
$n$ is a multiple of $k$, view $\ff^{n}$ as $\ff^{k}\oplus\ff^{k}\oplus\cdots\oplus\ff^{k}$.
Now define $\varphi$ by mapping a subspace $W$ of $\ff^{k}$ to
$W\oplus\cdots\oplus W$. It is easy to see that $\varphi$ preserves
renormalized rank, and is a lattice mapping. Every element of the
metric completion is rank modular with respect to $\rho$.
\end{example}

\begin{example}
\label{exa:MeasurableBooleanConst}The measurable Boolean lattice
on $[0,1]$ may be built from this machinery, as follows \cite[Section 8]{VonNeumann:1936b}.
Let $L_{n}$ be the Boolean lattice of all subsets of $[n]$. When
$n$ is a multiple of $k$, view $[n]$ as $[k]\times[n/k]$. Define
$\varphi$ by mapping $S\subseteq[k]$ to $S\times[n/k]$, and take
the directed limit and metric completion.

The identification with subsets of $[0,1]$ comes by identifying $i\in[n]$
with the interval $(\frac{i-1}{n},\frac{i}{n}]$. Since the Lebesgue
measure of a union of $m$ intervals of length $\frac{1}{n}$ is $\frac{m}{n}$,
the renormalized rank of $S\subseteq[n]$ agrees with the Lebesgue
measure. Now $L_{(\infty)}$ consists of all sets that may be written
as unions of half-open intervals with rational endpoints. The metric
completion gives us unions of such sets, recovering (up to measure
zero) all Borel sets.
\end{example}

\begin{example}
Björner \cite{Bjorner:1987} builds a measurable partition lattice
using a similar approach. He identifies elements of an underlying
$(n+1)$-element set with the set of intervals 
\[
\left\{ \{0\},(0,\frac{1}{n}],(\frac{1}{n},\frac{2}{n},\dots,(\frac{n-1}{n},1]\right\} .
\]
Then when $n$ is a multiple of $k$, we identify each interval of
length $\frac{1}{k}$ with $\frac{n}{k}$ intervals of length $\frac{1}{n}$
in a similar manner as in Example~\ref{exa:MeasurableBooleanConst}.
Rank supersolvability of every finite partition lattice yields rank
supersolvability of the limit object \cite[Theorem 2]{Bjorner:1987}.
\end{example}

We remark that while one might call $[0,1]$-graded or $\rr$-graded
posets ``continuously graded'', they do not appear to have any relationship
to speak of with the continuous posets of \cite{Gierz/Hofmann/Keimel/Lawson/Mislove/Scott:2003}
that are studied in computability theory. Indeed, while the measurable
Boolean lattice on $[0,1]$ is a main object for our purposes here,
it already fails to have interesting way-below relationships. When
every element of the lattice $L$ is rank modular, the $R$-grading
gives $L$ the structure of a metric lattice in the sense of \cite{Birkhoff:1967}.

Haiman gives explicit descriptions of Björner's measurable partition
lattice as well as of related lattices \cite{Haiman:1994}. Haiman's
lattices are again rank supersolvable \cite[Lemma 2.15]{Haiman:1994}.
A recent preprint of Contreras Mantilla and Sinclair \cite{Contreras-Mantilla/Sinclair:2025UNP}
discusses the Von Neumann and Björner construction from a model theory
perspective. See also the paper \cite{Vershik/Yakubovich:2001} of
Vershik and Yakubovich, and the survey article of Björner \cite{Bjorner:2019}.

\section{\label{sec:RMandSS}Rank modularity and rank supersolvable lattices}

\subsection{Continuity}

We begin with some basic results about rank supersolvable lattices,
extending the well-known theory of finite supersolvable lattices \cite{Foldes/Woodroofe:2021,McNamara:2003,Stanley:1972,Stern:1999}.

Throughout this section, let $R$ be a subset of $\rr$, or more generally
of the extended reals $[-\infty,\infty]$. Let $L$ be an $R$-graded
lattice. Let $\mathbf{m}=\left\{ m_{\lambda}:\lambda\in R\right\} $
be a maximal chain in $L$ with $\rho(m_{\lambda})=\lambda$, which
we will take to be the chief chain of $L$ when it is rank supersolvable.
We place the standard topology of $R\subseteq[-\infty,\infty]$ on
each maximal chain.

We start with an easy but useful consequence of rank modularity, not
requiring rank supersolvability.
\begin{lem}
\label{lem:RMbalance}If $w<z$ and $m'<m$ are elements of $L$,
where $m$ and $m'$ are rank modular, then
\begin{align*}
\rho(m\wedge z)+\rho(m\vee z)-\left(\rho(m\wedge w)+\rho(m\vee w)\right) & =\rho(z)-\rho(w),\text{ and}\\
\rho(m\wedge z)+\rho(m\vee z)-\left(\rho(m'\wedge z)+\rho(m'\vee z)\right) & =\rho(m)-\rho(m').
\end{align*}
\end{lem}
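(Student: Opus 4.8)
The plan is to derive both identities directly from the definition of rank modularity, applied twice and then subtracted; there is essentially no content here beyond bookkeeping, and the hypotheses $w<z$ and $m'<m$ play no role in the algebra itself (they are present because this is exactly the form in which the lemma will be invoked).

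For the first identity I would apply the rank modular condition for $m$ once with $x=z$ and once with $x=w$, obtaining
\[
\rho(m\wedge z)+\rho(m\vee z)=\rho(z)+\rho(m)\qquad\text{and}\qquad\rho(m\wedge w)+\rho(m\vee w)=\rho(w)+\rho(m).
\]
Subtracting the second equation from the first cancels the two $\rho(m)$ terms and leaves precisely $\rho(m\wedge z)+\rho(m\vee z)-\bigl(\rho(m\wedge w)+\rho(m\vee w)\bigr)=\rho(z)-\rho(w)$. For the second identity I would instead hold the element $z$ fixed and apply the rank modular condition once for $m$ and once for $m'$:
\[
\rho(m\wedge z)+\rho(m\vee z)=\rho(z)+\rho(m)\qquad\text{and}\qquad\rho(m'\wedge z)+\rho(m'\vee z)=\rho(z)+\rho(m'),
\]
and subtract to cancel the two $\rho(z)$ terms, giving $\rho(m\wedge z)+\rho(m\vee z)-\bigl(\rho(m'\wedge z)+\rho(m'\vee z)\bigr)=\rho(m)-\rho(m')$.

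I do not expect any genuine obstacle. The only point calling for a word of care is that $\rho$ is permitted to take values in the extended reals $[-\infty,\infty]$, so one should check that the cancellations above never produce an indeterminate $\infty-\infty$; here the hypotheses $w<z$ and $m'<m$, combined with the strict monotonicity of $\rho$ on $L$ (which holds because $\rho$ restricts to an order isomorphism on every maximal chain), keep the relevant sums under control — and in the setting of Theorem~\ref{thm:Main}, where $R\subseteq\rr$ is a bounded interval, every quantity in sight is simply finite, so the issue does not arise at all.
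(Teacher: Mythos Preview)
Your argument is correct and is exactly the paper's own proof: apply the rank modular identity for $m$ with $x=z$ and $x=w$, subtract to cancel $\rho(m)$, and handle the second equality symmetrically with $m$ and $m'$ at the fixed element $z$. Your extra remark about avoiding $\infty-\infty$ is a reasonable aside but not part of the paper's proof.
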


\begin{proof}
Compute! We have $\rho(m\wedge z)+\rho(m\vee z)=\rho(m)+\rho(z)$,
and similarly $\rho(m\wedge w)+\rho(m\vee w)=\rho(m)+\rho(w)$. The
$\rho(m)$ terms cancel, giving the first equality. The second equality
is entirely similar.
\end{proof}
As in the finite case, the difference in ranks $\rho(z)-\rho(w)$
may be viewed as giving the height of the interval $[w,z]$. We recover
from Lemma~\ref{lem:RMbalance} several bounds on such differences.
\begin{cor}
\label{cor:RMdiamondbounds}If $w<z$ and $m'<m$ are elements of
$L$, where $m$ and $m'$ are rank modular, then the following inequalities
hold
\begin{align*}
\rho(m\wedge z)-\rho(m\wedge w)\leq\rho(z)-\rho(w),\qquad & \rho(m\vee z)-\rho(m\vee w)\leq\rho(z)-\rho(w)\\
\rho(m\wedge z)-\rho(m'\wedge z)\leq\rho(m)-\rho(m'),\qquad & \rho(m\vee z)-\rho(m'\vee z)\leq\rho(m)-\rho(m').
\end{align*}
\end{cor}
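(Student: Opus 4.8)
The plan is to read the four inequalities directly off the two equalities of Lemma~\ref{lem:RMbalance}, using nothing beyond the fact that an $R$-grading is order-preserving.

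First I would record that monotonicity: if $x\le y$ in $L$, then (by Zorn's lemma) the chain $\{x,y\}$ extends to a maximal chain of $L$, and $\rho$ restricts to an order isomorphism onto a subset of $R$ on that chain, so $\rho(x)\le\rho(y)$. In particular, since $w<z$ gives $m\wedge w\le m\wedge z$ and $m\vee w\le m\vee z$, both differences $\rho(m\wedge z)-\rho(m\wedge w)$ and $\rho(m\vee z)-\rho(m\vee w)$ are nonnegative; likewise $m'<m$ gives $\rho(m\wedge z)-\rho(m'\wedge z)\ge 0$ and $\rho(m\vee z)-\rho(m'\vee z)\ge 0$.

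Now I would rewrite the first equality of Lemma~\ref{lem:RMbalance} as
\[
\bigl(\rho(m\wedge z)-\rho(m\wedge w)\bigr)+\bigl(\rho(m\vee z)-\rho(m\vee w)\bigr)=\rho(z)-\rho(w).
\]
Since each summand on the left is nonnegative, each is at most the right-hand side, which is exactly the top row of inequalities. Applying the same rearrangement to the second equality of Lemma~\ref{lem:RMbalance} yields the bottom row.

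There is no genuine obstacle here: the one piece of non-formal input is the monotonicity of a grading, which is immediate from the definition, and the rest is bookkeeping. The role of the corollary is simply to isolate these four one-sided estimates, which will be convenient when the grading $\sigma$ is constructed in Section~\ref{sec:ProofMainThm}.
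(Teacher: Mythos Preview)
Your argument is correct and matches the paper's proof essentially word for word: both observe that the left-hand sides are nonnegative by monotonicity of $\rho$, and that the two terms in each row sum to the right-hand side by Lemma~\ref{lem:RMbalance}. There is nothing to add.
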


\begin{proof}
The left-hand sides of the inequalities are all nonnegative by order
considerations. The left-hand sides of the first row sum to the right
hand side(s) of the first row; similarly for the second row.
\end{proof}
We immediately recover two continuity properties.
\begin{cor}
\label{cor:JoinMeetCtntyC}Let $\mathbf{c}=\left\{ c_{\kappa}:\kappa\in R\right\} $
be a maximal chain in $L$, with $\rho(c_{\kappa})=\kappa$. If $m$
is a rank modular element of $L$, then the following maps are Lipschitz
(hence continuous at every point in $R\cap\rr$): 
\[
\begin{array}{l}
R\to R\\
\kappa\mapsto\rho(m\wedge c_{\kappa})
\end{array},\text{\ensuremath{\quad}and\ensuremath{\quad}\ensuremath{\begin{array}{l}
R\to R\\
\kappa\mapsto\rho(m\vee c_{\kappa})
\end{array}}}.
\]
\end{cor}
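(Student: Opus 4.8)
The plan is to read the inequalities of Corollary~\ref{cor:RMdiamondbounds} directly as a Lipschitz estimate for the two maps along the chain $\mathbf{c}$. First, since $\rho$ restricts to an order isomorphism on the maximal chain $\mathbf{c}$, we have $c_{\kappa_{1}}<c_{\kappa_{2}}$ whenever $\kappa_{1}<\kappa_{2}$ in $R$. Fix such a pair $\kappa_{1}<\kappa_{2}$ and apply the first row of Corollary~\ref{cor:RMdiamondbounds} with $w=c_{\kappa_{1}}$ and $z=c_{\kappa_{2}}$; note that this row uses only the rank modularity of $m$ together with $w<z$, so no companion element is actually required (equivalently, one may invoke the first identity of Lemma~\ref{lem:RMbalance} directly, or take the second rank modular element to be $\hat{0}$ when $m\neq\hat{0}$, the case $m=\hat{0}$ being immediate). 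This yields
\[
0\le\rho(m\wedge c_{\kappa_{2}})-\rho(m\wedge c_{\kappa_{1}})\le\kappa_{2}-\kappa_{1}\qquad\text{and}\qquad 0\le\rho(m\vee c_{\kappa_{2}})-\rho(m\vee c_{\kappa_{1}})\le\kappa_{2}-\kappa_{1}.
\]

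Consequently both maps $\kappa\mapsto\rho(m\wedge c_{\kappa})$ and $\kappa\mapsto\rho(m\vee c_{\kappa})$ are nondecreasing and satisfy $|\rho(m\wedge c_{\kappa_{1}})-\rho(m\wedge c_{\kappa_{2}})|\le|\kappa_{1}-\kappa_{2}|$, and similarly for $\vee$; that is, they are $1$-Lipschitz with respect to the standard metric on $R$ (the inequality being vacuous at any infinite endpoints). Since Lipschitz maps are continuous, each of the two maps is continuous at every point of $R\cap\rr$. That the maps take values in $R$ needs no argument, as $\rho$ is $R$-valued by the definition of an $R$-grading.

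There is essentially no obstacle here: the entire content is packaged in Corollary~\ref{cor:RMdiamondbounds}, and the argument goes through for any $R\subseteq[-\infty,\infty]$ with no interval hypothesis. The only points deserving a word of care are that the parametrization $\kappa\mapsto c_{\kappa}$ is the stipulated homeomorphism of $R$ onto the chain $\mathbf{c}$, so that ``Lipschitz in $\kappa$'' is the natural notion, and that the estimate above controls the maps only at finite values of $\kappa$, with no claim made about the behavior as $\kappa\to\pm\infty$.
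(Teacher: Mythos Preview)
Your proof is correct and is essentially the paper's own argument: the paper states that the corollary is ``immediately recovered'' from Corollary~\ref{cor:RMdiamondbounds}, and you have simply spelled out how the first-row inequalities there (equivalently, the first identity of Lemma~\ref{lem:RMbalance}) yield the $1$-Lipschitz estimate along $\mathbf{c}$. Your parenthetical invoking Lemma~\ref{lem:RMbalance} directly is the cleanest justification, since $L$ need not have a $\hat{0}$ at this point in the paper.
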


\begin{cor}
\label{cor:JoinMeetCtntyM}If $L$ is rank supersolvable with chief
chain $\mathbf{m}$, and $z$ is an element of $L$, then the following
maps are Lipschitz (hence continuous at every point in $R\cap\rr$):
\[
\begin{array}{l}
R\to R\\
\lambda\mapsto\rho(m_{\lambda}\wedge z)
\end{array}\text{,\ensuremath{\quad}and\ensuremath{\quad}\ensuremath{\begin{array}{l}
R\to R\\
\lambda\mapsto\rho(m_{\lambda}\vee z)
\end{array}}}.
\]
\end{cor}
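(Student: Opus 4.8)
The plan is to read the conclusion straight off Corollary~\ref{cor:RMdiamondbounds}, in exact parallel to the (omitted) proof of Corollary~\ref{cor:JoinMeetCtntyC}, but invoking the second row of that corollary in place of the first. Since $L$ is rank supersolvable with chief chain $\mathbf{m}$, every element $m_{\lambda}$ of $\mathbf{m}$ is rank modular. Fix $z\in L$ and let $\lambda,\lambda'\in R$ with $\lambda<\lambda'$; then $m_{\lambda}<m_{\lambda'}$ are rank modular, so taking $m'=m_{\lambda}$ and $m=m_{\lambda'}$ in Corollary~\ref{cor:RMdiamondbounds} yields
\[
0\le\rho(m_{\lambda'}\wedge z)-\rho(m_{\lambda}\wedge z)\le\rho(m_{\lambda'})-\rho(m_{\lambda})=\lambda'-\lambda,
\]
and likewise $0\le\rho(m_{\lambda'}\vee z)-\rho(m_{\lambda}\vee z)\le\lambda'-\lambda$. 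Here the left-hand inequalities are just order-monotonicity: $m_{\lambda}\le m_{\lambda'}$ forces $m_{\lambda}\wedge z\le m_{\lambda'}\wedge z$ and $m_{\lambda}\vee z\le m_{\lambda'}\vee z$, and $\rho$ is order-preserving.

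First I would note that the two displayed chains of inequalities say precisely that $\lambda\mapsto\rho(m_{\lambda}\wedge z)$ and $\lambda\mapsto\rho(m_{\lambda}\vee z)$ are nondecreasing; combined with the upper bounds this upgrades each one-sided estimate to the two-sided bound $|\rho(m_{\lambda'}\wedge z)-\rho(m_{\lambda}\wedge z)|\le|\lambda'-\lambda|$ (and similarly for $\vee$). Thus both maps are Lipschitz with constant $1$, hence continuous at every point of $R\cap\rr$.

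There is essentially no obstacle here; the only point worth flagging is that $\rho$ is valued in the extended reals, so if $\lambda$ or $\lambda'$ equals $\pm\infty$ the estimate degenerates to plain monotonicity, which is exactly why Lipschitz continuity is asserted only at the finite points of $R$. At such a finite point the bound is the ordinary $1$-Lipschitz estimate and delivers genuine continuity, completing the argument.
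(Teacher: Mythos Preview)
Your argument is correct and is exactly the route the paper intends: the corollary is stated as an immediate consequence of Corollary~\ref{cor:RMdiamondbounds}, and you have correctly invoked the second row of that result (with $m'=m_{\lambda}$, $m=m_{\lambda'}$) to obtain the $1$-Lipschitz estimate. Your remark on monotonicity and on the degeneration at $\pm\infty$ is also accurate and matches the paper's parenthetical caveat.
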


An alternate statement of Corollary~\ref{cor:JoinMeetCtntyC} is
that the map formed by composing the three maps $\rho\vert_{\mathbf{c}}^{-1}$,
meet with $m$, and $\rho$ is Lipschitz (hence continuous) as a map
$R\to R$; similarly with Corollary~\ref{cor:JoinMeetCtntyM}.
\begin{rem}
In the case where $L$ is semimodular, so that $d(x,y)=2\rho(x\vee y)-\rho(x)-\rho(y)$
is a metric, then it is a general fact that the join operation is
Lipschitz in this metric (or a product of this metric with itself)
over the entire lattice \cite{Bjorner:1987}. The meet operation,
on the other hand, is not continuous even in the partition lattice.
We can extend Corollary~\ref{cor:JoinMeetCtntyC} to show that meet
with a rank modular $m$ in a semimodular lattice is Lipschitz in
the metric, since 
\begin{align*}
d(m\wedge x,m\wedge y) & =2\rho((m\wedge x)\vee(m\wedge y))-\rho(m\wedge x)-\rho(m\wedge y)\\
 & \leq2\rho(m\wedge(x\vee y))-\rho(m\wedge x)-\rho(m\wedge y)\\
 & =2\rho(x\vee y)-\rho(x)-\rho(y)-2\rho(m\vee x\vee y)\\
 & \leq d(x,y).
\end{align*}
It is likely that there are more general continuity statements, but
since these are incidental to our main purpose, we do not develop
this idea further.
\end{rem}

Although infinite lattices may not be bounded, this is easy to control.
The following is immediate from definition.
\begin{lem}
The lattice $L$ has a (unique) maximum element $\hat{1}$ if and
only if $R$ has a maximum element. Similarly for minimum elements.
\end{lem}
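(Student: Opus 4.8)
The plan is to unwind the definition of an $R$-grading, which demands that $\rho$ restrict to an order isomorphism $\rho|_{\mathbf{c}}\colon\mathbf{c}\to R$ on every maximal chain $\mathbf{c}$ of $L$; such chains exist since every chain of $L$ extends to a maximal one. I would treat the maximum case, the minimum case being order-dual (with joins replaced by meets throughout).

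For one direction, suppose $L$ has a maximum element $\hat{1}$, and fix any maximal chain $\mathbf{c}$. Since $\hat{1}$ is comparable to every element of $\mathbf{c}$, the chain $\mathbf{c}\cup\{\hat{1}\}$ contains $\mathbf{c}$, so maximality forces $\hat{1}\in\mathbf{c}$, and then $\hat{1}$ is the top of $\mathbf{c}$. Because $\rho|_{\mathbf{c}}$ is an order isomorphism onto $R$, the value $\rho(\hat{1})$ is the maximum of $R$.

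For the converse, suppose $R$ has a maximum element $r_{\max}$, and fix any maximal chain $\mathbf{c}$. The isomorphism $\rho|_{\mathbf{c}}$ produces a (necessarily unique) element $t\in\mathbf{c}$ with $\rho(t)=r_{\max}$, and this $t$ is the top of $\mathbf{c}$. Then $t$ is a maximal element of $L$: were $t<y$ for some $y$, then $y$ would be comparable to all of $\mathbf{c}$ and $\mathbf{c}\cup\{y\}$ would properly extend $\mathbf{c}$. Finally I would use that $L$ is a lattice: for any $x\in L$ the join $x\vee t$ exists and satisfies $x\vee t\geq t$, so maximality of $t$ gives $x\vee t=t$, i.e. $x\leq t$; hence $t$ is the maximum element $\hat{1}$ of $L$. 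Uniqueness is automatic, as any two maximum elements lie below each other.

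The only step that is more than routine — and the one place the lattice hypothesis is genuinely used — is passing from ``$t$ is maximal'' to ``$t$ is the maximum'' in the converse: a graded poset may perfectly well have a maximal chain whose top fails to be a global maximum, but in a lattice every maximal element is a maximum by the one-line join argument above. I expect no real obstacle beyond this; everything else is a direct reading of the definition of an $R$-grading together with the existence of maximal chains.
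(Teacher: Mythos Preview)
Your argument is correct; the paper itself offers no proof beyond the remark ``The following is immediate from definition,'' and what you have written is precisely a careful unpacking of that remark. Your observation that the lattice hypothesis is what promotes a maximal element to a maximum is the one substantive point, and it is handled cleanly.
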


Notice that if $R=[-\infty,\infty]$ and $\hat{1}$ is a join irreducible
or $\hat{0}$ is a meet irreducible, then the rank modular relation
involves an indeterminate form only when we compare the minimum $\hat{0}$
and maximum $\hat{1}$ elements, which causes no confusion.
\begin{lem}
\label{lem:SSBounded}If $L$ (and $R$) has no maximum element, then
we can extend $L$ to a $(R\mathop{\cup}\sup R)$-graded lattice by
adding a maximum element $\hat{1}$ to $L$ and taking $\rho(\hat{1})=\sup R$.
If $L$ is rank supersolvable with chief chain $\mathbf{m}$, then
$L\cup\{\hat{1}\}$ is rank supersolvable with chief chain $\mathbf{m}\cup\{\hat{1}\}$.

Similarly for minimum elements.
\end{lem}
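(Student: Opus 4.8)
The plan is to check three things in turn: that $L\cup\{\hat{1}\}$ is again a lattice; that the function $\rho$ extended by $\rho(\hat{1})=\sup R$ is an $(R\cup\sup R)$-grading; and that, when $L$ is rank supersolvable with chief chain $\mathbf{m}$, the chain $\mathbf{m}\cup\{\hat{1}\}$ witnesses rank supersolvability of $L\cup\{\hat{1}\}$. The minimum-element statement will then follow by the order dual, replacing $\hat{1}$, $\vee$, $\sup R$ by $\hat{0}$, $\wedge$, $\inf R$.

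First I would note that adjoining a new maximum to a lattice yields a lattice, since the joins and meets internal to $L$ are unchanged and $x\vee\hat{1}=\hat{1}$, $x\wedge\hat{1}=x$ for every $x\in L\cup\{\hat{1}\}$. For the grading claim, the step to establish is that the maximal chains of $L\cup\{\hat{1}\}$ are precisely the sets $C\cup\{\hat{1}\}$ for $C$ a maximal chain of $L$: since $\hat{1}$ is comparable with every element it must lie in every maximal chain, and deleting it from a maximal chain of $L\cup\{\hat{1}\}$ leaves a maximal chain of $L$ (any element of $L$ comparable with all of $C$ is also comparable with $\hat{1}$), and conversely. On such a chain $C\cup\{\hat{1}\}$, the extended $\rho$ restricts to an order isomorphism of $C$ onto $R$ by hypothesis, while $\rho(\hat{1})=\sup R$ is strictly above every element of $R$ --- here one uses that $R$ has no maximum, so that $\sup R\notin R$. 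Hence $\rho$ restricts to an order isomorphism onto $R\cup\{\sup R\}$, as required. (If $R$ is unbounded above in $\rr$, one reads $\sup R=\infty$ throughout.)

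For rank supersolvability, $\mathbf{m}\cup\{\hat{1}\}$ is a maximal chain of $L\cup\{\hat{1}\}$ by the previous step. The element $\hat{1}$ is rank modular because $\rho(x\vee\hat{1})+\rho(x\wedge\hat{1})=\rho(\hat{1})+\rho(x)$ holds for every $x$; and each $m_{\lambda}\in\mathbf{m}$ remains rank modular in $L\cup\{\hat{1}\}$ because its defining identity against an $x\in L$ is inherited from $L$, while against $x=\hat{1}$ it reduces to the tautology $\rho(\hat{1})+\rho(m_{\lambda})=\rho(\hat{1})+\rho(m_{\lambda})$, using $\hat{1}\vee m_{\lambda}=\hat{1}$ and $\hat{1}\wedge m_{\lambda}=m_{\lambda}$.

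I expect the only point requiring any real care to be the characterization of maximal chains, and within it the observation that $\sup R$ is not attained in $R$ --- this is what makes adjoining an element of rank $\sup R$ consistent with $\hat{1}$ lying strictly above all of $L$. Everything else is a direct computation.
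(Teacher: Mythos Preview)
Your proposal is correct and follows essentially the same approach as the paper's own proof, which dispatches the grading claim as ``immediate'' and verifies rank modularity of $\hat{1}$ via $\hat{1}\vee m=\hat{1}$ and $\hat{1}\wedge m=m$. Your version simply unpacks these two points more carefully, including the explicit check that the old chief-chain elements $m_{\lambda}$ remain rank modular against the new element $\hat{1}$.
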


\begin{proof}
It is immediate that the extension of $\rho$ is a grading. Since
$\hat{1}\vee m=\hat{1}$ and $\hat{1}\wedge m=m$, the element $\hat{1}$
is rank modular.
\end{proof}
Corollaries~\ref{cor:JoinMeetCtntyC} and \ref{cor:JoinMeetCtntyM}
give continuity of certain maps at every real point of $R$. If $R$
contains $\infty$ or $-\infty$, then additional care is required.
Recall from a first topology course that, since the extended reals
are homeomorphic to $[-1,1]$, a map $\varphi:[-\infty,\infty]\to[-\infty,\infty]$
is continuous if and only if it respects convergence of sequences.
As an immediate consequence, we obtain the following.
\begin{lem}
\label{lem:CtsAtInfty}An increasing map $\varphi:[-\infty,\infty]\to[-\infty,\infty]$
is continuous at $\infty$ (or \linebreak{}
respectively at $-\infty$) if and only if $\sup\left\{ \varphi(\alpha):\alpha<\infty\right\} =\varphi(\infty)$
(or respectively, if\linebreak{}
 $\inf\left\{ \varphi(\alpha):\alpha>-\infty\right\} =\varphi(-\infty)$).
\end{lem}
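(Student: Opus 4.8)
The plan is to reduce to sequential continuity and then exploit monotonicity. Since $[-\infty,\infty]$ is homeomorphic to $[-1,1]$, it is metrizable, so $\varphi$ is continuous at $\infty$ if and only if $\varphi(\alpha_{n})\to\varphi(\infty)$ for every sequence $\alpha_{n}\to\infty$; note that such a sequence approaches $\infty$ only ``from below''. First I would record the trivial half-inequality: because $\varphi$ is increasing, $\varphi(\alpha)\le\varphi(\infty)$ for all $\alpha<\infty$, so the quantity $s:=\sup\{\varphi(\alpha):\alpha<\infty\}$ satisfies $s\le\varphi(\infty)$. The content of the lemma is thus that $s=\varphi(\infty)$ is equivalent to continuity at $\infty$, and one may equivalently phrase this by saying that a monotone function on an interval has one-sided limits everywhere and its left limit at the top point $\infty$ is precisely $s$.

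For the forward implication, I would test continuity against the particular sequence $\alpha_{n}=n$. Continuity at $\infty$ then gives $\varphi(n)\to\varphi(\infty)$, and since $(\varphi(n))_{n}$ is nondecreasing its limit equals its supremum; combining this with $\sup_{n}\varphi(n)\le s\le\varphi(\infty)$ forces $s=\varphi(\infty)$.

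For the reverse implication, suppose $s=\varphi(\infty)$ and let $\alpha_{n}\to\infty$ be an arbitrary sequence. Discarding finitely many terms and splitting off any terms equal to $\infty$ (on which $\varphi$ already equals $\varphi(\infty)$), I may assume $\alpha_{n}<\infty$ for all $n$. For each fixed $\beta<\infty$ there is an $N$ with $\alpha_{n}>\beta$, and hence $\varphi(\alpha_{n})\ge\varphi(\beta)$, for all $n\ge N$; this gives $\liminf_{n}\varphi(\alpha_{n})\ge\varphi(\beta)$, and taking the supremum over $\beta<\infty$ yields $\liminf_{n}\varphi(\alpha_{n})\ge s=\varphi(\infty)$. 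Since also $\varphi(\alpha_{n})\le\varphi(\infty)$ for every $n$, we conclude $\varphi(\alpha_{n})\to\varphi(\infty)$, so $\varphi$ is continuous at $\infty$. The statement at $-\infty$ follows either by applying this to $\alpha\mapsto-\varphi(-\alpha)$ or by an identical argument with inequalities reversed. There is no genuine obstacle here; the only points needing a word of care are that $\infty$ is a boundary point approached from only one side, and that a sequence tending to $\infty$ may itself take the value $\infty$.
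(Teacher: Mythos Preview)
Your argument is correct and follows exactly the approach the paper intends: the paper does not give a separate proof of this lemma, stating it as an immediate consequence of the fact that $[-\infty,\infty]\cong[-1,1]$ is metrizable so continuity coincides with sequential continuity. Your write-up simply spells out the routine monotone-limit details behind that remark.
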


We remark that there are supersolvable (indeed, modular) lattices
where the maps of Corollaries~\ref{cor:JoinMeetCtntyC} and \ref{cor:JoinMeetCtntyM}
fail to be continuous at $\pm\infty$.
\begin{example}
\label{exa:DisCtsAtInfty} Consider the product lattice $\rr\times\rr$
with $\rr$-grading $\rho(a,b)=a+b$, and let $L=\rr\times\rr\cup\{\hat{0},\hat{1}\}$.
Then the join (meet) is obtained by taking entry-wise maximums (minimums),
and every element of $L$ is rank modular. Now $\mathbf{c}=\left\{ (0,b):b\in\rr\right\} \cup\{\hat{0},\hat{1}\}$
is a maximal chain where $\sup\left\{ (0,b)\wedge(1,0):b\in\rr\right\} =(0,0)$,
while $\hat{1}\wedge(1,0)=(1,0)$. In particular, the map $b\mapsto\rho((0,b)\wedge(1,0))$
fails to be continuous at $\infty$. An entirely similar argument
shows that the map $b\mapsto\rho((0,b)\vee(-1,0))$ fails to be continuous
at $-\infty$.
\end{example}

\begin{example}
\label{exa:BddMeasBool}Consider the lattice of all bounded Lebesgue-measurable
sets in $\rr$ (up to measure zero), with Lebesgue measure as a $[0,\infty)$-grading.
By modularity of measures, every element is rank modular. Then the
intervals $m_{\lambda}=[-\lambda,\lambda]$ form a chief chain. Since
every bounded measurable set is contained some $m_{\lambda}$, we
have that $\sup_{\lambda}\left\{ m_{\lambda}\wedge y\right\} =y$.
Thus, the first map of Corollary~\ref{cor:JoinMeetCtntyM} is continuous.

On the other hand, the intervals $c_{\kappa}=[1,1+\kappa]$ also form
a maximal chain, and $\sup_{\kappa}\left\{ c_{\kappa}\wedge[-1,1]\right\} =\{1\}\subsetneq[-1,1]$.
Thus, the first map of Corollary~\ref{cor:JoinMeetCtntyC} fails
to be continuous at $\infty$.
\end{example}

\subsection{Real-graded vs discrete-graded supersolvable lattices}

For a finite lattice, there are several conditions that are equivalent
to rank supersolvability. All are based on chains of elements satisfying
some modularity condition. Rank functions in the non-discrete case
are less rigid than those in the finite case, and that an element
that is rank modular with respect to one rank function does not generally
imply that it will be rank modular with respect to another.
\begin{example}
If $\rho(x)$ is a $[0,1]$-grading on $L$, then $\left(\rho(x)\right)^{2}$
is another $[0,1]$-grading. If $m$ is rank modular with respect
to $\rho$, however, then (by the failure of the Freshman's Dream
identity in characteristic $0$) it typically will not be rank modular
with respect to $\rho^{2}$.
\end{example}

\begin{example}
If $f:\rr\to\rr$ is a strictly increasing function, and $\rho$ is
an $R$-grading, then $f\circ\rho$ is an $f(R)$-grading. If $f$
is affine linear, then $m$ is rank modular with respect to $\rho$
if and only if it is rank modular with respect to $f\circ\rho$. If
$f$ is not affine linear, then it typically does not preserve rank
modularity of elements.
\end{example}

Going from rank modularity to other modularity conditions is easier.
The next lemma gives that a rank modular element is ``left modular''.
\begin{lem}
\label{lem:RMgivesLM}If $m$ is rank modular in $L$, then for any
$w<z$ in $L$, it holds that 
\[
(w\vee m)\wedge z=w\vee(m\wedge z).
\]
\end{lem}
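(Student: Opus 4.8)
The plan is to prove the two sides equal by showing they are comparable and have the same rank, and then invoking that an $R$-grading is strictly order-preserving. Write $a = w\vee(m\wedge z)$ and $b = (w\vee m)\wedge z$ for the left- and right-hand sides.

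First I would record the trivial inequality $a \leq b$: since $w \leq w\vee m$, $w \leq z$, $m\wedge z \leq w\vee m$, and $m\wedge z \leq z$, both $w$ and $m\wedge z$ lie below $(w\vee m)\wedge z$, hence so does their join $a$. It thus suffices to show $\rho(a)=\rho(b)$.

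The key step is to observe that $a$ and $b$ have the same join and the same meet with $m$. From the chain of inequalities $w \leq a \leq b \leq w\vee m$ one gets $a\vee m = b\vee m = w\vee m$, and from $m\wedge z \leq a \leq b \leq z$ one gets $a\wedge m = b\wedge m = m\wedge z$; these are one-line order computations (for instance $a\vee m \geq w\vee m$ because $a \geq w$, while $a\vee m \leq w\vee m$ because $a \leq w\vee m$, and symmetrically with $b$). Now two applications of rank modularity of $m$ yield
\[
\rho(a) + \rho(m) = \rho(a\vee m) + \rho(a\wedge m) = \rho(b\vee m) + \rho(b\wedge m) = \rho(b) + \rho(m),
\]
so that $\rho(a) = \rho(b)$, as desired.

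Finally, since $\rho$ restricts to an order isomorphism on every maximal chain, it is strictly order-preserving: extending a strict inequality $x < y$ to a maximal chain shows $\rho(x) < \rho(y)$. Combined with $a \leq b$ and $\rho(a) = \rho(b)$, this forces $a = b$. I do not expect any real obstacle here; the one point that must not be skipped is this last use of strict monotonicity of a grading, which is precisely what upgrades equality of ranks to equality of the elements themselves.
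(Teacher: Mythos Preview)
Your argument is correct and is the standard route: establish the modular inequality $a\le b$ by order considerations, observe that $a$ and $b$ have the same join and meet with $m$, apply rank modularity twice to get $\rho(a)=\rho(b)$, and conclude using strict monotonicity of the grading. The paper does not reproduce a proof here but cites \cite[Lemma~4.1]{Foldes/Woodroofe:2021}, where exactly this argument appears; so your proposal matches the intended proof.
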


\begin{proof}
Apply exactly the same argument as in \cite[Lemma 4.1]{Foldes/Woodroofe:2021}.
\end{proof}
Thus, if $m$ is rank modular, then we may write $w\vee m\wedge z$
without parentheses.

A maximal chain consisting of rank modular elements gives somewhat
stronger lattice conditions:
\begin{lem}
\label{lem:ElmSS}Let $L$ be rank supersolvable with chief chain
$\mathbf{m}$. For $z<w\in L$ and $m_{\lambda}<m_{\kappa}\in\mathbf{m}$,
it holds that 
\begin{align*}
(z\vee m_{\lambda})\wedge w & =z\vee(m_{\lambda}\wedge w),\text{ and}\\
(m_{\lambda}\vee z)\wedge m_{\kappa} & =m_{\lambda}\vee(z\wedge m_{\kappa}).
\end{align*}
\end{lem}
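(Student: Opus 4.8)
The plan is to treat the two displayed identities separately: the first is essentially immediate, while the second requires a short rank computation.

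For the first identity, note that $m_{\lambda}$ is rank modular, being an element of the chief chain $\mathbf{m}$. Since we are given $z<w$, Lemma~\ref{lem:RMgivesLM} applied to $m_{\lambda}$ (with the roles of its ``$w$'' and ``$z$'' played here by $z$ and $w$) yields exactly $(z\vee m_{\lambda})\wedge w=z\vee(m_{\lambda}\wedge w)$. No further work is needed.

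For the second identity, write $a=(m_{\lambda}\vee z)\wedge m_{\kappa}$ and $b=m_{\lambda}\vee(z\wedge m_{\kappa})$. One inequality is pure lattice theory: since $m_{\lambda}\leq m_{\kappa}$ we have $z\wedge m_{\kappa}\leq z\leq m_{\lambda}\vee z$ and $z\wedge m_{\kappa}\leq m_{\kappa}$, so $b\leq m_{\lambda}\vee z$ and $b\leq m_{\kappa}$, hence $b\leq a$. Because $L$ is $R$-graded, $\rho$ restricts to an (in particular injective) order isomorphism on any maximal chain running through $b\leq a$, so it suffices to prove $\rho(a)=\rho(b)$.

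To compute these two ranks, I would apply rank modularity four times. Rank modularity of $m_{\kappa}$ at the element $m_{\lambda}\vee z$, together with $m_{\kappa}\vee m_{\lambda}\vee z=m_{\kappa}\vee z$ (using $m_{\lambda}\leq m_{\kappa}$), gives $\rho(a)+\rho(m_{\kappa}\vee z)=\rho(m_{\lambda}\vee z)+\rho(m_{\kappa})$; rank modularity of $m_{\lambda}$ at the element $z\wedge m_{\kappa}$, together with $m_{\lambda}\wedge z\wedge m_{\kappa}=m_{\lambda}\wedge z$, gives $\rho(b)+\rho(m_{\lambda}\wedge z)=\rho(z\wedge m_{\kappa})+\rho(m_{\lambda})$. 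Then expand $\rho(m_{\lambda}\vee z)$ via rank modularity of $m_{\lambda}$ at $z$, and $\rho(m_{\kappa}\vee z)$ via rank modularity of $m_{\kappa}$ at $z$; substituting, the $\rho(m_{\kappa})$ and $\rho(z)$ terms cancel and both $\rho(a)$ and $\rho(b)$ collapse to $\rho(m_{\lambda})-\rho(m_{\lambda}\wedge z)+\rho(z\wedge m_{\kappa})$. Hence $\rho(a)=\rho(b)$, and with $b\leq a$ we conclude $a=b$. The computation is routine; the only point demanding attention is that, when $R$ is unbounded, some intermediate rank could be $\pm\infty$ and produce an indeterminate form. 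This can only occur when $z\vee m_{\kappa}$ or $z\wedge m_{\lambda}$ collapses to $\hat{1}$ or $\hat{0}$; under the irreducibility hypotheses on $\hat{0}$ and $\hat{1}$ noted earlier this forces $z$ or $m_{\kappa}$ itself to be $\hat{0}$ or $\hat{1}$, in which case the identity is immediate, so we may assume all ranks appearing are finite. The main (mild) obstacle is therefore just bookkeeping the four rank-modular relations so the cancellations are transparent.
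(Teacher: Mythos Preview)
Your proof is correct and follows essentially the same approach as the paper: both handle the first identity by a direct appeal to Lemma~\ref{lem:RMgivesLM}, and both handle the second by noting the easy inequality $b\leq a$ and then equating ranks via rank modularity. The only cosmetic difference is that the paper packages two of your four rank-modular expansions into a single invocation of Lemma~\ref{lem:RMbalance}, whereas you unpack them explicitly; the resulting computation is identical.
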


\begin{proof}
The first identity is a special case of Lemma~\ref{lem:RMgivesLM}.

For the second identity, we notice that since $(m_{\lambda}\vee z)\wedge m_{\kappa}\geq m_{\lambda}\vee(z\wedge m_{\kappa}),$
it suffices to show that the ranks agree. Now, since $m_{\lambda}\vee z\vee m_{\kappa}=z\vee m_{\kappa}$
and $m_{\lambda}\wedge z\wedge m_{\kappa}=m_{\lambda}\wedge z$, we
get that 
\begin{align*}
\rho((m_{\lambda}\vee z)\wedge m_{\kappa}) & =\rho(m_{\lambda}\vee z)+\kappa-\rho(z\vee m_{\kappa})\\
\rho(m_{\lambda}\vee(z\wedge m_{\kappa})) & =\lambda+\rho(m_{\kappa}\wedge z)-\rho(m_{\lambda}\wedge z).
\end{align*}
Now Lemma~\ref{lem:RMbalance} gives the difference of these ranks
to be zero, as desired.
\end{proof}
We observe in passing that since the sublattice generated by a finite
modular chain and another finite chain forms a distributive lattice
\cite{Foldes/Woodroofe:2021,Stanley:1972}, the sublattice of $L$
generated by (finite) combinations of $\mathbf{m}$ and another chain
$\mathbf{c}$ is distributive.

In the finite case, the converse of Lemma~\ref{lem:ElmSS} also holds
true \cite[Theorems 1.2 and 1.4]{Foldes/Woodroofe:2021}. The difficulty
in extending this converse to the non-discrete case is in constructing
a nice enough rank function.
\begin{question}
Given an $R$-graded lattice $L$ and a chain $\mathbf{m}$ satisfying
the conclusion of Lemma~\ref{lem:ElmSS}, is it possible to find
a rank function under which all elements of $\mathbf{m}$ are rank
modular?
\end{question}

We close this section by extending one more result from finite lattices
to $R$-graded lattices: rank modular elements project into intervals
nicely.
\begin{lem}
If $m$ is a rank modular element of $L$, then $w\vee m\wedge z$
is a rank modular element of the interval $[w,z]$ for any $w<z$
in $L$.
\end{lem}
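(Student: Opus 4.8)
The plan is to verify the defining equality of rank modularity directly, once the interval has been equipped with a grading. Write $m'=w\vee m\wedge z$; this is unambiguous by Lemma~\ref{lem:RMgivesLM}, and it equals both $w\vee(m\wedge z)$ and $(w\vee m)\wedge z$. First I would observe that the restriction of $\rho$ to $[w,z]$ is an $R$-grading of $[w,z]$: any maximal chain of $[w,z]$ extends, by Zorn's lemma, to a maximal chain of $L$, whose trace on $[w,z]$ is that same chain by maximality, so $\rho$ restricts to an order isomorphism on it. Since $[w,z]$ is an interval, its joins and meets coincide with those of $L$, so it suffices to prove, for every $x$ with $w\le x\le z$, that
\[
\rho(x\vee m')+\rho(x\wedge m')=\rho(x)+\rho(m').
\]

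Next I would simplify the two compound terms on the left. Because $x\ge w$ we get $x\vee m'=x\vee(m\wedge z)$, and because $x\le z$ we get $x\wedge m'=x\wedge(w\vee m)$, which in turn equals $w\vee(m\wedge x)$ by Lemma~\ref{lem:RMgivesLM} (the degenerate case $w=x$ being immediate). The one genuine observation is that each of the elements $x\vee(m\wedge z)$, $w\vee(m\wedge x)$, and $m'$ itself \emph{collapses against $m$} by pure order considerations: for instance $x\vee(m\wedge z)\le z$ forces $\bigl(x\vee(m\wedge z)\bigr)\wedge m=m\wedge z$, while $\bigl(x\vee(m\wedge z)\bigr)\vee m=x\vee m$. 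Substituting these into the rank modularity of $m$ yields
\[
\rho\bigl(x\vee(m\wedge z)\bigr)=\rho(x\vee m)+\rho(m\wedge z)-\rho(m),
\]
and, by the analogous collapses using $w\le x$ and $m\le w\vee m$ respectively,
\[
\rho\bigl(w\vee(m\wedge x)\bigr)=\rho(w\vee m)+\rho(m\wedge x)-\rho(m),\qquad \rho(m')=\rho(w\vee m)+\rho(m\wedge z)-\rho(m).
\]

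Finally I would feed these three identities into the displayed equality and apply the rank modularity of $m$ once more, in the form $\rho(x\vee m)+\rho(m\wedge x)=\rho(x)+\rho(m)$, to see that both sides reduce to $\rho(x)+\rho(w\vee m)+\rho(m\wedge z)-\rho(m)$. I expect the only mild obstacle to be the bookkeeping in the collapse step — keeping straight which of $x\le z$, $x\ge w$, $m\wedge z\le m$, $m\le w\vee m$ is invoked where — rather than anything conceptual; the verification that $[w,z]$ inherits a grading is routine, and the algebra that follows is forced. One could alternatively repackage the three rank computations through Lemma~\ref{lem:RMbalance}, but the direct route above seems cleanest.
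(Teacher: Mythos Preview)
Your proof is correct and rests on the same core computation as the paper's: one applies the rank modularity of $m$ to elements of the form $w\vee(m\wedge z)$ after observing that their join and meet with $m$ collapse to $w\vee m$ and $m\wedge z$ respectively. The only organizational difference is that the paper first reduces, by duality and iteration, to showing that $m\wedge z$ is rank modular in $[\hat{0},z]$ and then does a single such collapse, whereas you handle the full interval $[w,z]$ directly with three parallel collapses; the underlying mechanism is identical.
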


\begin{proof}
It suffices by duality and iteration to show that $m\wedge z$ is
rank modular in the interval $[\hat{0},z]$. For $w<z$, we desire
to have $\rho(w\vee m\wedge z)=\rho(w)+\rho(m\wedge z)-\rho(m\wedge w)$.
But the rank modular identity with $m$ and $w\vee m\wedge z$ gives
us that 
\[
\rho(m\vee(w\vee m\wedge z))+\rho(m\wedge(w\vee m\wedge z))-\rho(m)=\rho(w\vee m\wedge z).
\]
 Noticing that $m\vee(w\vee m\wedge z)=m\vee w$ and $m\wedge(w\vee m\wedge z)=m\wedge z$,
and expanding $\rho(m\vee w)$ as $\rho(m)+\rho(w)-\rho(w\wedge m)$
now yields the desired.
\end{proof}

\section{\label{sec:ProofMainThm}Proof of Theorem~\ref{thm:Main}}

Throughout this section, let $L$ be a rank supersolvable lattice
with a given $R$-grading $\rho$ over some interval $R\subseteq\rr$.
We may assume without loss of generality by Lemma~\ref{lem:SSBounded}
that $R$ is compact (passing to the extended reals if necessary),
and so that $L$ has a minimum element $\hat{0}$ and maximum element
$\hat{1}$. Let $\mathbf{m}=\left\{ m_{\lambda}:\lambda\in R\right\} $
be the chief chain guaranteed by rank supersolvability, where $\rho(m_{\lambda})=\lambda$.
Let $A$ be an antichain cutset.

We will prove the following generalization of Theorem~\ref{thm:Main}.
\begin{thm}
\label{thm:MainExt}Let $R\subseteq[-\infty,\infty]$ be an interval.
Let $L$ be a rank supersolvable lattice with respect to the $R$-grading
$\rho$, with chief chain $\mathbf{m}$. Suppose further that the
following conditions are met for every maximal chain $\mathbf{c}$,
$\lambda_{0}\in R$, and $z\in L$:
\begin{alignat*}{2}
\sup\big\{ m_{\lambda_{0}}\wedge c & :c\in\mathbf{c},\rho(c)<\infty\big\}=m_{\lambda_{0}}\quad & \quad\inf\big\{ m_{\lambda_{0}}\vee c & :c\in\mathbf{c},\rho(c)>-\infty\big\}=m_{\lambda_{0}}\\
\sup\big\{ m_{\lambda}\wedge z & :\lambda\in R,\lambda<\infty\big\}=z & \inf\big\{ m_{\lambda}\vee z & :\lambda\in R,\lambda>-\infty\big\}=z.
\end{alignat*}
If $A$ is an antichain cutset for $L$, then there is some grading
$\sigma$ under which $A$ is a level set.
\end{thm}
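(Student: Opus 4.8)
The plan is to regrade $L$ by measuring the position of each element relative to $A$ along a canonical maximal chain anchored at the chief chain $\mathbf{m}$. Since $\mathbf{m}$ is a maximal chain and $A$ a cutset, $\mathbf{m}\cap A=\{m_{\lambda_{0}}\}$ for a unique $\lambda_{0}$; this $\lambda_{0}$ will be the value of $\sigma$ on $A$. First I record the combinatorial skeleton: because $A$ is an antichain cutset, every $z\in L$ is comparable to some member of $A$ (look along a maximal chain through $z$), and if $z\notin A$ the members of $A$ comparable to $z$ all lie above $z$ or all lie below $z$ (else $a<z<a'$ contradicts that $A$ is an antichain), so $L=D\cup U$ with $D$ (those $z$ with comparable members of $A$ above, together with $A$) a down-set, $U$ dually an up-set, and $D\cap U=A$. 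For $z\in L$ set $C_{z}:=\{z\wedge m_{\lambda}:\lambda\in R\}\cup\{z\vee m_{\lambda}:\lambda\in R\}$; this is a chain, and by Corollary~\ref{cor:RMdiamondbounds} the maps $\lambda\mapsto\rho(z\wedge m_{\lambda})$ and $\lambda\mapsto\rho(z\vee m_{\lambda})$ are non-decreasing and $1$-Lipschitz, while Corollary~\ref{cor:JoinMeetCtntyM}, together with Lemma~\ref{lem:CtsAtInfty} and the hypotheses of the theorem, make them continuous on all of $R$. As the first runs from $\min R$ to $\rho(z)$ and the second from $\rho(z)$ to $\max R$, the intermediate value theorem gives $\rho(C_{z})=R$; since $\rho$ is injective on chains, no chain properly contains $C_{z}$, so $C_{z}$ is a maximal chain. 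Note $z\in C_{z}$, that $C_{m_{\lambda}}=\mathbf{m}$ for every $\lambda$, and that $C_{z}\cap A$ lies in $\{z\wedge m_{\lambda}\}$ when $z\in U$ and in $\{z\vee m_{\lambda}\}$ when $z\in D$.

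Let $a(z)$ be the unique element of $A$ on the maximal chain $C_{z}$, and define $\sigma(z):=\rho(z)-\rho(a(z))+\lambda_{0}$. The level-set property is then immediate: $\sigma(z)=\lambda_{0}$ iff $\rho(z)=\rho(a(z))$ iff $z=a(z)$ (both elements lie on $C_{z}$, on which $\rho$ is injective) iff $z\in A$. Moreover $a(\hat{0})=a(\hat{1})=m_{\lambda_{0}}$, so $\sigma(\hat{0})=\min R$ and $\sigma(\hat{1})=\max R$, and $\sigma$ restricts to $\rho$ on $\mathbf{m}$.

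It remains to check that $\sigma$ restricts to an order isomorphism onto $R$ on every maximal chain $\mathbf{c}=\{c_{\kappa}:\kappa\in R\}$. Since $\sigma(c_{\kappa})=\kappa-\rho(a(c_{\kappa}))+\lambda_{0}$ with the correct endpoint values, this reduces to showing that $\kappa\mapsto\rho(a(c_{\kappa}))$ is continuous and that $\rho(a(c_{\kappa'}))-\rho(a(c_{\kappa}))<\kappa'-\kappa$ whenever $\kappa<\kappa'$; this last estimate is the heart of the argument and I expect it to be the main obstacle. Writing $c_{\kappa_{0}}$ for the crossing point $\mathbf{c}\cap A$, one has $c_{\kappa}\in U$ for $\kappa\geq\kappa_{0}$ and $c_{\kappa}\in D$ for $\kappa\leq\kappa_{0}$. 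On the $U$-portion $a(c_{\kappa})=c_{\kappa}\wedge m_{\mu}$ for a suitable $\mu=\mu(\kappa)$; the easy case $\mu(\kappa)\leq\mu(\kappa')$ forces $a(c_{\kappa})\leq a(c_{\kappa'})$, hence $a(c_{\kappa})=a(c_{\kappa'})$ as both lie in the antichain $A$, and the estimate is trivial. The remaining case needs more than the Lipschitz bounds: I would work inside the sublattice generated by finitely many $m_{\lambda}$ together with $c_{\kappa}$ and $c_{\kappa'}$, which is distributive (so $\rho$ is modular there) by Lemma~\ref{lem:ElmSS} and the remark following it, and combine the modular identity for $a(c_{\kappa})$ and $a(c_{\kappa'})$ with the inequalities of Corollary~\ref{cor:RMdiamondbounds} to push the rank difference strictly below $\kappa'-\kappa$. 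The $D$-portion is dual, and the two one-sided estimates glue across $\kappa_{0}$ using continuity of $\sigma$ at $c_{\kappa_{0}}$; the continuity of $\kappa\mapsto\rho(a(c_{\kappa}))$ away from $\kappa_{0}$ comes from Corollaries~\ref{cor:JoinMeetCtntyC} and~\ref{cor:JoinMeetCtntyM} (with Lemma~\ref{lem:CtsAtInfty} and the hypotheses at $\pm\infty$ handling the endpoints), and at $\kappa_{0}$ from a squeeze against $\rho(c_{\kappa})\to\rho(c_{\kappa_{0}})=\rho(a(c_{\kappa_{0}}))$. Finally, Theorem~\ref{thm:Main} is the special case of a bounded interval: the reduction at the start of this section makes $R$ compact, and then the four supremum/infimum hypotheses of Theorem~\ref{thm:MainExt} hold automatically, since there is no point at $\pm\infty$.
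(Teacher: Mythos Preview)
Your construction is exactly the paper's: the chain $C_z$, its unique intersection $a(z)$ with $A$, and $\sigma(z)=\rho(z)-\rho(a(z))$ (your additive shift by $\lambda_0$ is harmless). But you have the relative difficulty of the two remaining steps backwards. The ``main obstacle'' you anticipate---strict monotonicity in the case $\mu(\kappa)>\mu(\kappa')$---needs no distributivity and follows from the Lipschitz bounds you already cite. With $w=c_\kappa<z=c_{\kappa'}$, $\lambda=\mu(\kappa)>\nu=\mu(\kappa')$, observe that $m_\nu\wedge w=(m_\lambda\wedge w)\wedge(m_\nu\wedge z)=a(w)\wedge a(z)$, which is \emph{strictly} below $a(w)$ since $a(w)\neq a(z)$ are incomparable. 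Hence
\[
\rho(a(z))-\rho(a(w))<\rho(m_\nu\wedge z)-\rho(m_\nu\wedge w)\leq\rho(z)-\rho(w),
\]
the last step by Corollary~\ref{cor:RMdiamondbounds}. That is the whole argument (the paper's Lemma~\ref{lem:SigmaIncreasing}).

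The genuine gap is your continuity claim for $\kappa\mapsto\rho(a(c_\kappa))$. Corollaries~\ref{cor:JoinMeetCtntyC} and~\ref{cor:JoinMeetCtntyM} give continuity of $\kappa\mapsto\rho(m_{\lambda_0}\wedge c_\kappa)$ for a \emph{fixed} $\lambda_0$, but $a(c_\kappa)=m_{\mu(\kappa)}\wedge c_\kappa$ with $\mu(\kappa)$ varying, and the one-sided estimate above does not by itself bound $\rho(a(c_\kappa))$ from below as $\kappa\searrow\kappa_0$ (your ``squeeze'' only supplies the upper bound). The paper proves this as a separate lemma (Lemma~\ref{lem:RankDiffLipschitz}): take the minimal $\lambda$ with $m_\lambda\wedge w=a(w)$; choose $m_\kappa<m_\lambda$ above $A$ with $\rho(m_\lambda\wedge w)-\rho(m_\kappa\wedge w)<\epsilon/2$; use the \emph{reversed} good chain $\{m_\kappa\wedge c:c\in\mathbf{c}\}\cup\{m_\kappa\vee c:c\in\mathbf{c}\}$ (maximal by the same IVT argument, now via the first-row hypotheses of the theorem) to locate $z_0>w$ with $a(z_0)=m_\kappa\wedge z_0$; then for $z$ between $w$ and $z_0$ the point $a(z)$ is trapped in $[m_\kappa\wedge z,m_\lambda\wedge z]$, an interval whose $\rho$-length is controlled by the fixed-index continuity. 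This step is where the extra sup/inf hypotheses over $\mathbf{c}$ are actually used, and it is missing from your outline.
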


We notice that if $R$ is bounded from above (in $\rr$), then the
supremum conditions are automatic; similarly if $R$ is bounded from
below with the infimum conditions. Indeed, in light of Lemma~\ref{lem:CtsAtInfty}
and the discussion surrounding, we see that the additional conditions
give exactly that the maps of Corollaries~\ref{cor:JoinMeetCtntyC}
and \ref{cor:JoinMeetCtntyM} are continuous also at $\infty$ and/or
at $-\infty$ (if present in $R$). We summarize with a lemma:
\begin{lem}
\label{lem:CtsJoinMeet}Under the conditions of Theorem~\ref{thm:MainExt},
writing $\mathbf{c}=\left\{ c_{\kappa}:\kappa\in R\right\} $, the
following maps $R\to R$ are continuous.
\begin{align*}
\lambda\mapsto\rho(m_{\lambda}\wedge z)\qquad & \lambda\mapsto\rho(m_{\lambda}\vee z)\\
\kappa\mapsto\rho(m_{\lambda_{0}}\wedge c_{\kappa})\qquad & \kappa\mapsto\rho(m_{\lambda_{0}}\vee c_{\kappa})
\end{align*}
\end{lem}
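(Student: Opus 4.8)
The plan is to reduce the claim to continuity at the (at most two) infinite endpoints of $R$, and there, through Lemma~\ref{lem:CtsAtInfty}, to the matching of a supremum or infimum of $\rho$-values against the endpoint value of the map. First I would record that all four maps are increasing: meet and join are monotone in each argument and $\rho$ restricts to an order isomorphism on every maximal chain, so, for example, $\lambda\le\lambda'$ gives $m_\lambda\wedge z\le m_{\lambda'}\wedge z$ and hence $\rho(m_\lambda\wedge z)\le\rho(m_{\lambda'}\wedge z)$. By Corollary~\ref{cor:JoinMeetCtntyM}, and by Corollary~\ref{cor:JoinMeetCtntyC} applied to the rank modular chief-chain element $m_{\lambda_0}$, all four maps are Lipschitz, hence continuous at every point of $R\cap\rr$. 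So only the points $\infty$ and $-\infty$, when they belong to $R$, remain, and by Lemma~\ref{lem:CtsAtInfty} continuity of an increasing map there reduces to the equation $\sup\{\varphi(\alpha):\alpha<\infty\}=\varphi(\infty)$ (respectively $\inf\{\varphi(\alpha):\alpha>-\infty\}=\varphi(-\infty)$).

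Next I would isolate the technical crux as a short claim: if $\mathbf{d}$ is a maximal chain of $L$, $S\subseteq\mathbf{d}$ is nonempty, and $x=\sup_L S$ exists with $x\in\mathbf{d}$, then $\rho(x)=\sup_{s\in S}\rho(s)$; dually for infima. This is quick. Since $\rho$ restricts to an order isomorphism $\mathbf{d}\to R$, the number $t:=\sup_{s\in S}\rho(s)$ is squeezed between $\rho(s_0)$ for any fixed $s_0\in S$ and $\rho(x)$, so it lies in the interval $R$; pulling $t$ back along $\rho|_{\mathbf{d}}$ produces an upper bound $x^{\ast}\in\mathbf{d}\subseteq L$ of $S$, whence $x\le x^{\ast}$ and $\rho(x)\le t$, while $\rho(x)\ge t$ is monotonicity.

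With the claim available, each of the eight (map, endpoint) pairs runs through one template. Using Zorn's lemma, extend to a maximal chain $\mathbf{d}$ the chain consisting of all the relevant meets or joins $m_\lambda\wedge z$ (or $m_\lambda\vee z$, or $m_{\lambda_0}\wedge c_\kappa$, or $m_{\lambda_0}\vee c_\kappa$) together with $\hat{0}$, $\hat{1}$, and whichever of $z$ or $m_{\lambda_0}$ occurs. Identify the endpoint value of the map as $\rho$ of the corresponding meet or join with $\hat{0}$ or $\hat{1}$ (using that $m_{\infty}=c_{\infty}=\hat{1}$ and $m_{-\infty}=c_{-\infty}=\hat{0}$ whenever these make sense), and use the claim to rewrite the required supremum or infimum of $\rho$-values as $\rho$ of the corresponding supremum or infimum \emph{in $L$}, which lies in $\mathbf{d}$. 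For four of the pairs --- $\lambda\mapsto\rho(m_\lambda\wedge z)$ at $\infty$, $\lambda\mapsto\rho(m_\lambda\vee z)$ at $-\infty$, $\kappa\mapsto\rho(m_{\lambda_0}\wedge c_\kappa)$ at $\infty$, and $\kappa\mapsto\rho(m_{\lambda_0}\vee c_\kappa)$ at $-\infty$ --- this supremum or infimum in $L$ is precisely one of the four displayed hypotheses of Theorem~\ref{thm:MainExt}. For the other four pairs there is nothing to invoke, but the needed identity is forced anyway: for instance, any lower bound $y$ of $\{m_\lambda\wedge z:\lambda>-\infty\}$ satisfies $y\le m_\lambda$ for every $\lambda>-\infty$ and $y\ge\hat{0}=m_{-\infty}$, so $y$ is comparable to every element of the chief chain $\mathbf{m}$; maximality of $\mathbf{m}$ then puts $y$ in $\mathbf{m}$, and since $R$ is an interval containing $-\infty$ this forces $y=\hat{0}$, whence $\inf_L\{m_\lambda\wedge z:\lambda>-\infty\}=\hat{0}$. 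The mirror-image arguments, two of them using the maximal chain $\mathbf{c}$ in place of $\mathbf{m}$, cover the remaining pairs. (If $R$ is a single point the statement is vacuous, so we may assume $R$ has at least two points, which is what makes the relevant suprema and infima of index sets equal $\infty$ or $-\infty$.)

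I expect the main obstacle to be the crux claim and its careful deployment rather than any individual computation: $\rho$ has no a priori continuity along chains, and $L$ need not be complete, so one must form suprema and infima only where their existence --- and membership in some maximal chain --- is already guaranteed. That is exactly why the supplementary hypotheses of Theorem~\ref{thm:MainExt}, along with the comparability argument handling the remaining cases, are stated in terms of meets and joins in $L$ and not in terms of $\rho$.
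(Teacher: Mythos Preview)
Your proposal is correct and follows the paper's intended route: the paper treats Lemma~\ref{lem:CtsJoinMeet} as a summary of the preceding discussion, namely that Corollaries~\ref{cor:JoinMeetCtntyC} and \ref{cor:JoinMeetCtntyM} handle all finite points of $R$, while Lemma~\ref{lem:CtsAtInfty} together with the four displayed hypotheses of Theorem~\ref{thm:MainExt} handle $\pm\infty$. Your crux claim (that $\rho(\sup_L S)=\sup_{s\in S}\rho(s)$ when the supremum lies on the same maximal chain as $S$) is exactly the translation step the paper leaves implicit between the lattice-theoretic hypotheses and the $\rho$-value condition in Lemma~\ref{lem:CtsAtInfty}, and your proof of it is clean.

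One simplification worth noting: for the four ``easy'' (map, endpoint) pairs you handle via maximality of $\mathbf{m}$ or $\mathbf{c}$, a direct squeeze is shorter and avoids any appeal to suprema in $L$. For instance, continuity of $\lambda\mapsto\rho(m_\lambda\wedge z)$ at $-\infty$ follows from $\rho(\hat{0})\le\rho(m_\lambda\wedge z)\le\rho(m_\lambda)=\lambda$, so that $\inf_{\lambda>-\infty}\rho(m_\lambda\wedge z)=-\infty=\rho(\hat{0})$; the other three easy cases are entirely parallel. This is presumably what the paper has in mind when it says the hypotheses give ``exactly'' the needed continuity, i.e., the remaining four cases are automatic. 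Your chain-maximality argument is valid but heavier than needed here.
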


\begin{rem}
Examination of Examples~\ref{exa:DisCtsAtInfty} and \ref{exa:BddMeasBool}
demonstrates that continuity at $\pm\infty$ is not automatic. Since
the lattice of Example~\ref{exa:DisCtsAtInfty} is isomorphic to
a space-time causality order, the conclusion of Theorem~\ref{thm:MainExt}
nonetheless holds by \cite{Foldes:2024}. We do not know whether the
antichain cutsets of the lattice in Example~\ref{exa:BddMeasBool}
are level sets in some $[0,\infty)$-grading.
\end{rem}

We now construct the new rank function $\sigma$ for Theorem~\ref{thm:MainExt}.
Consider the \emph{good chain} through $z$ given by $\left\{ z\wedge m_{\lambda}\,:\,\lambda\in R\right\} \cup\left\{ z\vee m_{\lambda}\,:\,\lambda\in R\right\} $.
We will show that the good chain through $z$ is a maximal chain,
hence it intersects with $A$ at a unique point $\alpha(z)$. We define
\[
\sigma(z):=\rho(z)-\rho(\alpha(z)).
\]
It is obvious by construction that $\sigma(a)=0$ if and only if $a\in A$,
so that $A$ is a level set with respect to $\sigma$. We will show
$\sigma$ to be a new $[\sigma(\hat{0}),\sigma(\hat{1})]$-rank function,
which can then be rescaled to an $R$-grading. We will need to check
that $\sigma$ is well-defined, strictly-increasing, and surjective
onto $[\sigma(\hat{0}),\sigma(\hat{1})]$ over any maximal chain.

First, in order for $\sigma$ to be well-defined, we need to check
that the good chain is maximal.
\begin{lem}
\label{lem:GoodChain}The good chain through $z$ is a maximal chain.
\end{lem}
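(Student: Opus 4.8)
The plan is to show that the good chain through $z$, namely
\[
G_z=\left\{z\wedge m_\lambda:\lambda\in R\right\}\cup\left\{z\vee m_\lambda:\lambda\in R\right\},
\]
is a saturated chain running from $\hat 0$ to $\hat 1$ with no gaps, and hence maximal. First I would observe that $G_z$ is totally ordered: the two families $\{z\wedge m_\lambda\}$ and $\{z\vee m_\lambda\}$ are each chains since $\mathbf m$ is, and every $z\wedge m_\lambda$ lies below $z$, which lies below every $z\vee m_\mu$, so the union is a chain. At the bottom, $z\wedge m_{\inf R}=z\wedge\hat 0=\hat 0$ and at the top $z\vee m_{\sup R}=z\vee\hat 1=\hat 1$ (using the reduction, via Lemma~\ref{lem:SSBounded}, to the case where $\hat 0,\hat 1$ exist and are the bottom/top of $\mathbf m$). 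So $G_z$ is a chain containing $\hat 0$, $z$, and $\hat 1$.

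Next I would argue maximality by ruling out gaps. A chain containing $\hat 0$ and $\hat 1$ is maximal precisely when it cannot be properly refined, i.e. when for any two consecutive elements $x<y$ of the chain there is no element strictly between them. Here the key tool is the continuity supplied by Lemma~\ref{lem:CtsJoinMeet}: the maps $\lambda\mapsto\rho(z\wedge m_\lambda)$ and $\lambda\mapsto\rho(z\vee m_\lambda)$ are continuous (and nondecreasing) on the compact interval $R$. Suppose some element $y$ of $L$ satisfies $x<y<x'$ where $x<x'$ are elements of $G_z$ with nothing of $G_z$ strictly between them; I want a contradiction. The argument splits according to whether $x,x'$ both come from the meet family, both from the join family, or straddle (i.e. $x=z\wedge m_\lambda$-type and $x'=z\vee m_\mu$-type, forced to be adjacent through $z$ itself). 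In the straddling case one of $x,x'$ equals $z$, and then $y$ is comparable to $z$; say $z<y<z\vee m_\mu$ — but then $y=y\wedge(z\vee m_\mu)$, and using left modularity of $m_\mu$ (Lemma~\ref{lem:RMgivesLM}) together with the fact that $y\vee m_{\lambda}$ interpolates, one shows $y$ must already appear as some $z\vee m_{\mu'}$. The homogeneous cases are handled by the continuity of the rank maps: if $x=z\vee m_\lambda$ and $x'=z\vee m_{\lambda'}$ are ``consecutive'' in $G_z$, the Intermediate Value Theorem applied to the continuous map $t\mapsto\rho(z\vee m_t)$ on $[\lambda,\lambda']$, combined with monotonicity and the strict inequality $\rho(x)<\rho(y)<\rho(x')$ one would get from $x<y<x'$, produces a $t$ with $\rho(z\vee m_t)=\rho(y)$; one then checks $z\vee m_t$ is comparable to $y$ of equal rank, hence equal, contradicting $y\notin G_z$.

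The cleanest packaging is probably to prove directly that $\rho$ restricted to $G_z$ is surjective onto $R$ (equivalently onto $[\rho(\hat 0),\rho(\hat 1)]$): for any target value $r\in R$, either $r$ is attained by $\rho(z\wedge m_\lambda)$ for some $\lambda\le\rho(z)$ — which follows from IVT since $\rho(z\wedge m_{\inf R})=\rho(\hat 0)$ and $\rho(z\wedge m_{\rho(z)})=\rho(z)$ and the map is continuous — or, for $r>\rho(z)$, $r$ is attained by $\rho(z\vee m_\lambda)$ for some $\lambda$, again by IVT since this map runs continuously from $\rho(z\vee m_{\inf R})=\rho(z)$ up to $\rho(z\vee m_{\sup R})=\rho(\hat 1)$. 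A chain from $\hat 0$ to $\hat 1$ on which the grading $\rho$ is surjective onto all of $R$ must be maximal, because any maximal chain refining it would carry an order isomorphism onto $R$ that agrees with $\rho$ and hence cannot add new points. I expect the main obstacle to be the bookkeeping in the straddling / ``$y$ comparable to $z$'' case, where maximality of $\mathbf m$ and left modularity have to be combined to show $y$ is forced into the good chain — the purely real-analytic part (IVT plus monotonicity plus the continuity lemma) is routine, whereas showing no ``extra'' element can slip in near $z$ itself is where the lattice-theoretic structure, rather than just the grading, has to be used.
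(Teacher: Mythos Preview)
Your ``cleanest packaging'' paragraph is exactly the paper's proof: use Lemma~\ref{lem:CtsJoinMeet} to see that $\rho(\{z\wedge m_\lambda\})$ and $\rho(\{z\vee m_\lambda\})$ are each continuous images of the connected set $R$, note they share the value $\rho(z)$ and contain $\rho(\hat 0)$, $\rho(\hat 1)$, and conclude that $\rho|_{G_z}$ is onto $R$. Maximality then follows exactly as you say, since any maximal chain containing $G_z$ is mapped bijectively to $R$ by $\rho$.

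Where you go astray is the final paragraph. There is no ``main obstacle'' in the straddling case, and no lattice-theoretic work beyond the grading is needed: once $\rho|_{G_z}$ is surjective onto $R$, you are done. If some $y\notin G_z$ satisfied $x<y<x'$ for $x,x'\in G_z$, extend $G_z\cup\{y\}$ to a maximal chain; then $\rho$ is injective on that chain, yet $\rho(y)$ is already hit by some element of $G_z$, a contradiction. Your sketched left-modularity argument for the straddling case (writing $y=z\vee(m_\mu\wedge y)$) does not actually land $y$ in $G_z$, since $m_\mu\wedge y$ need not lie on $\mathbf m$; fortunately it is unnecessary.

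One small slip in the IVT bookkeeping: you wrote $\rho(z\wedge m_{\rho(z)})=\rho(z)$, but $z\wedge m_{\rho(z)}$ need not equal $z$. The correct upper endpoint is $\lambda=\sup R$, giving $z\wedge m_{\sup R}=z\wedge\hat 1=z$; likewise the lower endpoint for the join family is $\lambda=\inf R$.
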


\begin{proof}
It suffices to show that elements of the good chain achieve every
rank in $R$.

It follows from Lemma~\ref{lem:CtsJoinMeet} that $\rho(\left\{ z\wedge m_{\lambda}\,:\,\lambda\in R\right\} )$
is the continuous image of a connected set having upper bound $\rho(z)$;
similarly for $\rho(\left\{ z\vee m_{\lambda}\,:\,\lambda\in R\right\} )$.
As these share the point $\rho(z)$, the ranks of a good chain form
a connected subset of $R$. Since $\hat{0}=\hat{0}\wedge z$ and $\hat{1}=\hat{1}\vee z$,
the desired follows.
\end{proof}
An entirely similar proof gives:
\begin{lem}
\label{lem:ReversedGoodChain}If $\mathbf{c}$ is a maximal chain
and $m$ is on the given chief chain $\mathbf{m}$ of $L$ , then
the chain $\{m\wedge c:c\in\mathbf{c}\}\cup\{m\vee c:c\in\mathbf{c}\}$
is a maximal chain.
\end{lem}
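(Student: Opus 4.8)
\textbf{Plan for Lemma~\ref{lem:ReversedGoodChain}.}

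The plan is to mirror the proof of Lemma~\ref{lem:GoodChain}, taking care that the ``moving'' element is now $c$ ranging over $\mathbf{c}$ while the rank modular element $m$ is held fixed. Write $m = m_{\lambda_0}$ for the relevant element of the chief chain, and $\mathbf{c} = \{c_\kappa : \kappa \in R\}$ with $\rho(c_\kappa) = \kappa$. First I would observe that the chain in question is indeed a chain: for $\kappa' < \kappa$ we have $m \wedge c_{\kappa'} \le m \wedge c_\kappa \le m \vee c_\kappa$ and $m \vee c_{\kappa'} \le m \vee c_\kappa$, and moreover $m \wedge c_\kappa \le m \le m \vee c_{\kappa'}$, so the ``meet part'' sits entirely below the ``join part''. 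Thus it is a chain, and it suffices to show that its elements realize every rank in $R$.

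Next I would invoke Lemma~\ref{lem:CtsJoinMeet}: under the standing hypotheses of Theorem~\ref{thm:MainExt}, the maps $\kappa \mapsto \rho(m_{\lambda_0} \wedge c_\kappa)$ and $\kappa \mapsto \rho(m_{\lambda_0} \vee c_\kappa)$ are continuous $R \to R$. Hence $\rho(\{m \wedge c_\kappa : \kappa \in R\})$ is the continuous image of the connected set $R$, so it is a connected subset of $R$, and likewise for $\rho(\{m \vee c_\kappa : \kappa \in R\})$. At $\kappa$ with $c_\kappa = m$ (if such exists) the two images share the point $\lambda_0$; more robustly, the meet-part always contains $\rho(m \wedge \hat 1) = \lambda_0$ (since $\hat 1 \in \mathbf{c}$ forces $m \wedge c_\kappa = m$ for the top of $\mathbf{c}$) and the join-part always contains $\rho(m \vee \hat 0) = \lambda_0$, so the two connected images overlap in $\lambda_0$ and their union is connected. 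Finally, $\hat 0 = m \wedge \hat 0 = m \wedge c_{\min R}$ and $\hat 1 = m \vee \hat 1 = m \vee c_{\max R}$ lie in the chain, so the connected set of realized ranks contains both endpoints of $R$, hence equals $R$. Therefore the chain meets every rank, so it is maximal.

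The only real subtlety — and the step I would flag as the main obstacle — is the bookkeeping at $\pm\infty$ when $R$ is unbounded: the endpoint identities $\hat 0 = m \wedge c_{\min R}$ and the overlap point $\lambda_0$ must be genuine, and the continuity of the two coordinate maps \emph{at} $\pm\infty$ is precisely what the extra sup/inf hypotheses of Theorem~\ref{thm:MainExt} buy us (via Lemma~\ref{lem:CtsJoinMeet}). Since the excerpt already packages all of this into Lemma~\ref{lem:CtsJoinMeet}, the argument reduces to ``continuous image of a connected set is connected, and it contains both endpoints of $R$,'' exactly as in Lemma~\ref{lem:GoodChain}; I would simply remark that the proof is ``entirely similar'' after noting the two differences above (fixed $m$, moving $c$, and the use of the second pair of maps in Lemma~\ref{lem:CtsJoinMeet} rather than the first).
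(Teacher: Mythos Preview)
Your proposal is correct and follows exactly the approach the paper intends: the paper's proof is literally the phrase ``An entirely similar proof gives,'' referring back to Lemma~\ref{lem:GoodChain}, and you have carried out precisely that symmetry argument (fixed $m$, moving $c$, second pair of maps in Lemma~\ref{lem:CtsJoinMeet}, shared point $\lambda_0$, endpoints $\hat0$ and $\hat1$). Your extra care about the overlap point and the $\pm\infty$ bookkeeping is sound and more explicit than the paper itself.
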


\begin{rem}
By Corollary~\ref{cor:JoinMeetCtntyC}, if $L$ is $R$-graded for
$R$ a bounded subset of $\rr$, then we may relax the requirement
that $m$ lie on the given chief chain to simple rank modularity.
\end{rem}

We will mostly focus on elements $z$ above the antichain cutset $A$;
elements below the antichain cutset may be dealt with in a completely
symmetrical manner.

We start with an easy observation:
\begin{lem}
\label{lem:OrderLambdasFromAlphas}If $w<z$ are above or on the antichain
cutset, so that $\alpha(w)=m_{\lambda}\wedge w$ and $\alpha(z)=m_{\kappa}\wedge z$,
then either $\lambda>\kappa$ or else $\alpha(w)=\alpha(z)$.
\end{lem}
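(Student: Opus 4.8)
The plan is to prove this by contrapositive, using nothing beyond the facts that $\mathbf{m}$ is a chain and that $A$ is an antichain. First I would assume $\lambda\le\kappa$. Since $\rho(m_{\mu})=\mu$ for all $\mu\in R$ and $\mathbf{m}$ is a chain, this forces $m_{\lambda}\le m_{\kappa}$. Meeting with a fixed element preserves order, so $m_{\lambda}\wedge w\le m_{\kappa}\wedge w$; and since $w<z$ we also have $m_{\kappa}\wedge w\le m_{\kappa}\wedge z$. Chaining these gives
\[
\alpha(w)=m_{\lambda}\wedge w\le m_{\kappa}\wedge w\le m_{\kappa}\wedge z=\alpha(z).
\]
But $\alpha(w)$ and $\alpha(z)$ lie in the antichain cutset $A$ by construction, so any comparability $\alpha(w)\le\alpha(z)$ between them must be an equality. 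Hence $\lambda\le\kappa$ implies $\alpha(w)=\alpha(z)$, which is exactly the contrapositive of the statement: either $\lambda>\kappa$, or $\alpha(w)=\alpha(z)$.

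The only minor point of bookkeeping is the implicit claim that, for $z$ above or on $A$, the unique point $\alpha(z)$ of $A$ on the good chain through $z$ genuinely has the meet form $m_{\kappa}\wedge z$ assumed in the statement, rather than a join form $z\vee m_{\kappa}$; this is fine, since any $z\vee m_{\mu}$ that is $\le z$ already equals $z=z\wedge m_{\sup R}$ under the reduction to compact $R$ with $\hat 1=m_{\sup R}$, so a meet representation is always available. I do not expect any real obstacle here: the whole content of the lemma is that an antichain contains no strict comparabilities, and everything else is immediate order monotonicity of $\wedge$. (The symmetric statement for $w<z$ below or on $A$, where one writes $\alpha(w)=m_{\lambda}\vee w$ and $\alpha(z)=m_{\kappa}\vee z$, is handled by the same computation with joins in place of meets.)
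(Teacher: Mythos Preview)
Your proof is correct and follows exactly the same route as the paper: assume $\lambda\le\kappa$, deduce $m_{\lambda}\wedge w\le m_{\kappa}\wedge z$ by monotonicity of meet, and conclude equality from the antichain property of $A$. The paper states this in two sentences without the intermediate step $m_{\kappa}\wedge w$, but the content is identical.
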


\begin{proof}
If $\lambda\leq\kappa$, then $m_{\lambda}\wedge w\leq m_{\kappa}\wedge z$.
Since both lie on the antichain $A$, equality follows.
\end{proof}
See the illustration in Figure~\ref{fig:ProjectionA}. We can now
obtain the increasing property:
\begin{lem}
\label{lem:SigmaIncreasing}The map $\sigma$ is strictly increasing.
\end{lem}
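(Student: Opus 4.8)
The plan is to show $\sigma(w)<\sigma(z)$ whenever $w<z$, after reducing to the case that $w$ and $z$ lie on the same side of $A$. Since $\alpha(x)$ lies on the good chain through $x$, it is comparable to $x$; as $\rho$ restricts to a (strictly monotone) order isomorphism on any maximal chain through the pair $x,\alpha(x)$, each element $x$ satisfies exactly one of: $\alpha(x)<x$ (equivalently $\sigma(x)>0$), $\alpha(x)=x$ (equivalently $x\in A$ and $\sigma(x)=0$), or $\alpha(x)>x$ (equivalently $\sigma(x)<0$). Call $x$ \emph{above or on $A$} in the first two cases and \emph{below or on $A$} in the last two. Given $w<z$: if $w$ is above or on $A$ then so is $z$, since otherwise $\alpha(w)\le w<z\le\alpha(z)$ would exhibit distinct comparable elements of the antichain $A$; hence the only case not of the form ``both above or on $A$'' or ``both below or on $A$'' is that $w$ lies strictly below $A$ while $z$ is above or on $A$, where $\sigma(w)<0\le\sigma(z)$ is immediate. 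It therefore suffices to treat the two same-side cases.

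In the case $w<z$ both above or on $A$, I would write $\alpha(w)=m_{\lambda}\wedge w$ and $\alpha(z)=m_{\kappa}\wedge z$, which is possible since each of these elements lies below the relevant lattice element. By Lemma~\ref{lem:OrderLambdasFromAlphas}, either $\alpha(w)=\alpha(z)$, in which case $\sigma(z)-\sigma(w)=\rho(z)-\rho(w)>0$ and we are done, or else $\lambda>\kappa$. In the latter case $m_{\kappa}\wedge z\le m_{\lambda}\wedge z$, so the first summand below is nonpositive, while Corollary~\ref{cor:RMdiamondbounds} bounds the second:
\[
\rho(\alpha(z))-\rho(\alpha(w))=\bigl(\rho(m_{\kappa}\wedge z)-\rho(m_{\lambda}\wedge z)\bigr)+\bigl(\rho(m_{\lambda}\wedge z)-\rho(m_{\lambda}\wedge w)\bigr)\le\rho(z)-\rho(w).
\]
This already yields $\sigma(z)\ge\sigma(w)$. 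To get strict inequality I would argue that equality forces $\rho(m_{\kappa}\wedge z)=\rho(m_{\lambda}\wedge z)$, hence $m_{\kappa}\wedge z=m_{\lambda}\wedge z$ since these are comparable, hence $\alpha(z)=m_{\lambda}\wedge z\ge m_{\lambda}\wedge w=\alpha(w)$; as both of these lie on the antichain $A$ we obtain $\alpha(w)=\alpha(z)$, returning us to the previous case and contradicting the assumed equality since $\rho(z)>\rho(w)$. The case $w<z$ both below or on $A$ is handled by the order-dual argument, writing $\alpha(w)=m_{\lambda}\vee w$ and $\alpha(z)=m_{\kappa}\vee z$ and using the dual of Lemma~\ref{lem:OrderLambdasFromAlphas} together with the join halves of Corollary~\ref{cor:RMdiamondbounds}.

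I expect the one genuinely delicate point to be the strictness in the subcase $\lambda>\kappa$: the inequality $\rho(m_{\kappa}\wedge z)\le\rho(m_{\lambda}\wedge z)$ is not strict in general, so one cannot obtain strict monotonicity just by summing two bounds. The resolution is the observation that whenever that inequality is an equality, the cutset points $\alpha(w)$ and $\alpha(z)$ must coincide, at which point strictness follows for free from $w<z$ and the strict monotonicity of $\rho$ along chains. Everything else is routine bookkeeping with Corollary~\ref{cor:RMdiamondbounds} and the definition of $\sigma$.
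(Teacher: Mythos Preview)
Your proof is correct and follows essentially the same approach as the paper: reduce to the same-side case, write $\alpha(w)=m_{\lambda}\wedge w$ and $\alpha(z)=m_{\kappa}\wedge z$, invoke Lemma~\ref{lem:OrderLambdasFromAlphas}, and bound via Corollary~\ref{cor:RMdiamondbounds}. The only notable difference is in how strictness is obtained in the subcase $\lambda>\kappa$: the paper interposes $m_{\kappa}\wedge w$ rather than your $m_{\lambda}\wedge z$, observing that $m_{\kappa}\wedge w=\alpha(w)\wedge\alpha(z)<\alpha(w)$ whenever $\alpha(w)\neq\alpha(z)$, so the strict inequality $\rho(m_{\lambda}\wedge w)>\rho(m_{\kappa}\wedge w)$ is immediate and no equality-case contradiction argument is needed; your route via $m_{\lambda}\wedge z$ and the contradiction argument is equally valid but slightly less direct on exactly the point you flagged as delicate.
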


\begin{proof}
We need to show that if $w<z$, then $\sigma(w)<\sigma(z)$.

We first check the case where $w$ and $z$ are both above or on $A$.
We write $\alpha(w)=m_{\lambda}\wedge w$ and $\alpha(z)=m_{\kappa}\wedge z$.
If $\alpha(w)=\alpha(z)$, then the result is immediate since $\rho$
is strictly increasing. Thus, we may assume by Lemma~\ref{lem:OrderLambdasFromAlphas}
that $\lambda>\kappa$. Now 
\begin{align*}
\sigma(z)-\sigma(w) & =\left(\rho(z)-\rho(w)\right)-\left(\rho(m_{\kappa}\wedge z)-\rho(m_{\lambda}\wedge w)\right)\\
 & >\left(\rho(z)-\rho(w)\right)-\left(\rho(m_{\kappa}\wedge z)-\rho(m_{\kappa}\wedge w)\right)\\
 & \geq0.
\end{align*}
where the strict inequality is because $m_{\kappa}\wedge w=\alpha(w)\wedge\alpha(z)<\alpha(w)$,
and the weak inequality is by Corollary~\ref{cor:RMdiamondbounds}.

Finally, if both are below or on $A$, then the result follows by
duality. If $w$ is below $A$ and $z$ is above, then there is by
definition an element $x$ of $A$ on a maximal chain between them,
and then $\sigma(w)<\sigma(x)<\sigma(z)$.
\end{proof}
In order to show that $\sigma$ is bijective over $\mathbf{c}$ onto
$[\sigma(\hat{0}),\sigma(\hat{1})]$, it now suffices to show that
its image is connected. This will be an easy consequence of the following
lemma; compare with the discussion following Corollaries~\ref{cor:JoinMeetCtntyC}
and \ref{cor:JoinMeetCtntyM}.

\begin{figure}
\medskip{}
\includegraphics{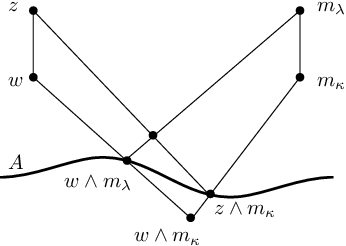}

\caption{\label{fig:ProjectionA}Projection to $A$.}
\end{figure}

\begin{lem}
\label{lem:RankDiffLipschitz}If $\mathbf{c}$ is a maximal chain,
then $\hat{\alpha}=\rho\circ\alpha\circ\rho\vert_{\mathbf{c}}^{-1}$
is a continuous map $R\to\mathbb{R}$.

That is, the rank of $\alpha(z)$ varies continuously over the chain
$\mathbf{c}$.
\end{lem}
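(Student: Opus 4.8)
The plan is to express the rank of the projection explicitly through the two‑variable function $h(\kappa,\lambda):=\rho(c_\kappa\wedge m_\lambda)$ and to exploit a monotonicity property of the parameter $\lambda$ that realizes the projection. First I would record that $h$ is jointly continuous on $R\times R$: by Corollary~\ref{cor:RMdiamondbounds} it is $1$‑Lipschitz in each coordinate separately (using that every $m_\lambda$ is rank modular), so $|h(\kappa,\lambda)-h(\kappa_0,\lambda_0)|\le|\kappa-\kappa_0|+|\lambda-\lambda_0|$ away from infinity, and Lemma~\ref{lem:CtsJoinMeet} together with a sequential argument handles the infinite cases. Next I would reduce to one side of the cutset: the chain $\mathbf c$ meets $A$ in a single point $c_{\kappa^*}$, the values $c_\kappa$ lie (strictly) above $A$ for $\kappa>\kappa^*$ and below for $\kappa<\kappa^*$, and $\hat\alpha(\kappa^*)=\kappa^*$; since the cases $A=\{\hat{0}\}$ or $\{\hat{1}\}$ are trivial we may assume $\hat{0},\hat{1}\notin A$, so $\kappa^*$ is interior, and by duality it suffices to treat $\kappa\ge\kappa^*$.

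For $\kappa>\kappa^*$ the point $\alpha(c_\kappa)$ lies in the ``meet half'' of the good chain, so $J(\kappa):=\{\lambda\in R:c_\kappa\wedge m_\lambda=\alpha(c_\kappa)\}$ is nonempty; since $\lambda\mapsto c_\kappa\wedge m_\lambda$ is order preserving, $\lambda\mapsto h(\kappa,\lambda)$ is continuous, and $\rho$ is injective on chains, $J(\kappa)$ is a closed subinterval $[\mu^-(\kappa),\mu^+(\kappa)]$ of $R$, and $\hat\alpha(\kappa)=h(\kappa,\mu^-(\kappa))=h(\kappa,\mu^+(\kappa))$. The key point is that $\mu^-$ and $\mu^+$ are non‑increasing on $(\kappa^*,\sup R]$: by Lemma~\ref{lem:OrderLambdasFromAlphas}, $J(\kappa_1)$ lies entirely to the right of $J(\kappa_2)$ whenever $\kappa^*<\kappa_1<\kappa_2$ and $\alpha(c_{\kappa_1})\ne\alpha(c_{\kappa_2})$, while the case $\alpha(c_{\kappa_1})=\alpha(c_{\kappa_2})$ is a short direct check. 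Consequently, for each $\kappa_0\in(\kappa^*,\sup R]$ the left limit $L=\lim_{\kappa\to\kappa_0^-}\mu^-(\kappa)$ exists with $L\ge\mu^-(\kappa_0)$, and joint continuity of $h$ gives $\lim_{\kappa\to\kappa_0^-}\hat\alpha(\kappa)=h(\kappa_0,L)$; thus left‑continuity at $\kappa_0$ amounts to $L\le\mu^+(\kappa_0)$, i.e.\ $L\in J(\kappa_0)$, and using $\mu^+$ in place of $\mu^-$ yields right‑continuity the same way.

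It remains to rule out $L>\mu^+(\kappa_0)$, and here is where the reversed good chains enter. Pick $\lambda$ with $\mu^+(\kappa_0)<\lambda<L$. Then $c_\kappa\wedge m_\lambda$ is below $A$ for every $\kappa\in(\kappa^*,\kappa_0)$ (as $\lambda<L\le\mu^-(\kappa)$), is below or on $A$ for $\kappa\le\kappa^*$, while $c_{\kappa_0}\wedge m_\lambda$ is above $A$ (as $\lambda>\mu^+(\kappa_0)$). Applying Lemma~\ref{lem:ReversedGoodChain} to $m_\lambda$, the maximal chain $D_\lambda=\{m_\lambda\wedge c_\kappa:\kappa\in R\}\cup\{m_\lambda\vee c_\kappa:\kappa\in R\}$ meets $A$ in a single point $\beta$, and comparing each $m_\lambda\wedge c_\kappa$ with $\beta$ forces $m_\lambda\wedge c_\kappa<\beta$ for $\kappa\in(\kappa^*,\kappa_0)$, $m_\lambda\wedge c_\kappa>\beta$ for $\kappa\ge\kappa_0$, and $m_\lambda\wedge c_\kappa\le\beta$ for $\kappa\le\kappa^*$ with equality possible only at $\kappa^*$. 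Since $\beta<m_\lambda\wedge c_{\kappa_0}\le m_\lambda$, the point $\beta$ lies in the meet half of $D_\lambda$, so $\beta=m_\lambda\wedge c_\kappa$ for some $\kappa$; the first two relations exclude $\kappa>\kappa^*$, so $\beta=m_\lambda\wedge c_{\kappa^*}\le c_{\kappa^*}$, whence $\beta=c_{\kappa^*}$ and $c_{\kappa^*}\le m_\lambda$; but then $m_\lambda\wedge c_\kappa\ge c_{\kappa^*}=\beta$ for every $\kappa\in(\kappa^*,\kappa_0)$, contradicting $m_\lambda\wedge c_\kappa<\beta$. Hence $L\le\mu^+(\kappa_0)$ and $\hat\alpha$ is continuous from the left at $\kappa_0$; the symmetric argument for $\mu^+$, the dual treatment of $\kappa\le\kappa^*$, and the analogous (degenerate‑case aside) verifications at $\kappa_0=\kappa^*$ and at the endpoints $\kappa_0=\pm\infty$ complete the proof. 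I expect this last step---showing that the monotone functions $\mu^{\pm}$ cannot jump over the interval $J(\kappa_0)$---to be the main obstacle; everything else is bookkeeping around Corollary~\ref{cor:RMdiamondbounds} and Lemma~\ref{lem:OrderLambdasFromAlphas}.
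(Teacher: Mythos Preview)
Your argument is correct and complete (at least for bounded $R$; the joint continuity of $h$ at $\pm\infty$ is stated a bit briskly, but the monotonicity of $h$ in each variable together with Lemma~\ref{lem:CtsJoinMeet} does give what you need). The route, however, differs from the paper's. The paper gives a direct $\epsilon$--$\delta$ sandwich: for right-continuity at $w$ above $A$, it takes the minimal $\lambda$ with $m_\lambda\wedge w=\alpha(w)$, picks a nearby $m_\kappa$ just above $A$ with $\rho(m_\lambda\wedge w)-\rho(m_\kappa\wedge w)<\epsilon/2$, and then uses Lemma~\ref{lem:ReversedGoodChain} \emph{constructively} to produce $z_0>w$ with $m_\kappa\wedge z_0=\alpha(z_0)$; for $w<z<z_0$ the value $\alpha(z)$ is trapped between $m_\kappa\wedge z$ and $m_\lambda\wedge z$, and Corollary~\ref{cor:RMdiamondbounds} finishes. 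You instead package the ambiguity in the realizing parameter as the interval $J(\kappa)=[\mu^-(\kappa),\mu^+(\kappa)]$, prove $\mu^\pm$ monotone via Lemma~\ref{lem:OrderLambdasFromAlphas}, reduce continuity of $\hat\alpha$ to the statement that the one-sided limits of $\mu^\pm$ land in $J(\kappa_0)$, and then invoke Lemma~\ref{lem:ReversedGoodChain} \emph{by contradiction} to rule out a jump. Both proofs rest on the same two ingredients (the Lipschitz bounds of Corollary~\ref{cor:RMdiamondbounds} and the reversed good chain), but the paper's version is about half the length, while yours isolates the monotone structure of $\mu^\pm$ as a reusable fact and makes the role of $J(\kappa)$ explicit.
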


\begin{proof}
We prove that $\hat{\alpha}$ is continuous from the right at a point
$w$ that is above or on the antichain cutset $A$. Continuity from
the left above $A$ is entirely similar, and points below or on the
antichain cutset follow from duality.

By Lemma~\ref{lem:CtsJoinMeet} and the fact that the continuous
preimage of a point is closed, there is a minimal $\lambda$ with
$m_{\lambda}\wedge w=\alpha(w)$. It follows that $m_{\lambda}\geq\alpha(w)$.
If $m_{\lambda}=\alpha(w)$, then $m_{\lambda}=\alpha(z)$ also for
every $z>w$, and a constant function is right-continuous.

Thus, we may assume that $m_{\lambda}>\alpha(w)$. By Lemmas~\ref{lem:CtsJoinMeet}
and \ref{lem:GoodChain} and minimality of $\lambda$, we can choose
an $m_{\kappa}$ that is above the antichain cutset, below $m_{\lambda}$,
and so that $\rho(m_{\lambda}\wedge w)-\rho(m_{\kappa}\wedge w)<\epsilon/2$.
By Lemma~\ref{lem:ReversedGoodChain}, the chain $\left\{ m_{\kappa}\wedge c:c\in\mathbf{c}\right\} $
is maximal on the interval $[\hat{0},m_{\kappa}]$, hence there is
some $z_{0}>w$ so that $m_{\kappa}\wedge z_{0}=\alpha(z_{0})$. Now,
since $m_{\kappa}\wedge z\leq\alpha(z_{0})$ and $m_{\lambda}\wedge z\geq\alpha(w)$,
every $z$ between $w$ and $z_{0}$ has $\alpha(z)$ between $m_{\kappa}\wedge z$
and $m_{\lambda}\wedge z$. In particular, if $z\in\mathbf{c}$ is
chosen so $\rho(m_{\lambda}\wedge z)<\min(\rho(m_{\lambda}\wedge w)+\epsilon/2,\rho(m_{\lambda}\wedge z_{0}))$,
then $\alpha(z)$ and $\alpha(w)$ are both on an interval $[m_{\kappa}\wedge w,m_{\lambda}\wedge z]$.
Since $\rho(m_{\lambda}\wedge z)-\rho(m_{\kappa}\wedge w)<\epsilon$,
the result now follows.
\end{proof}
\begin{cor}
\label{cor:SigmaSurjective}For any maximal chain $\mathbf{c}$, it
holds that $\sigma\circ\rho\vert_{\mathbf{c}}^{-1}$ is a continuous
map, hence that $\sigma\vert_{\mathbf{c}}$ is surjective onto the
interval $[\sigma(\hat{0}),\sigma(\hat{1})]$.
\end{cor}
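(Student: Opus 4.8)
The plan is to observe that $\sigma\circ\rho\vert_{\mathbf{c}}^{-1}$ is, up to subtracting the identity, exactly the map $\hat{\alpha}$ of Lemma~\ref{lem:RankDiffLipschitz}. Writing $c_{\kappa}=\rho\vert_{\mathbf{c}}^{-1}(\kappa)$ for the unique element of $\mathbf{c}$ of rank $\kappa$, the definition of $\sigma$ gives
\[
\bigl(\sigma\circ\rho\vert_{\mathbf{c}}^{-1}\bigr)(\kappa)=\rho(c_{\kappa})-\rho(\alpha(c_{\kappa}))=\kappa-\hat{\alpha}(\kappa).
\]
The identity map on $R$ is continuous, and $\hat{\alpha}$ is continuous by Lemma~\ref{lem:RankDiffLipschitz}; since $\hat{\alpha}$ is moreover real-valued (so no indeterminate form $\infty-\infty$ can occur), the difference $\sigma\circ\rho\vert_{\mathbf{c}}^{-1}$ is a continuous map $R\to\rr$. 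Note that this absorbs $\rho\vert_{\mathbf{c}}^{-1}$ into the already-established continuity of $\hat{\alpha}$, so no separate discussion of the order topology on $\mathbf{c}$ is needed.

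For surjectivity, I would run a standard connectedness argument. As $R$ is an interval it is connected, so $\sigma(\mathbf{c})=\bigl(\sigma\circ\rho\vert_{\mathbf{c}}^{-1}\bigr)(R)$ is a connected subset of $\rr$, i.e.\ an interval. By the reduction at the start of this section, $L$ has a minimum $\hat{0}$ and a maximum $\hat{1}$, and these lie on every maximal chain, so $\sigma(\hat{0}),\sigma(\hat{1})\in\sigma(\mathbf{c})$; by Lemma~\ref{lem:SigmaIncreasing} they are respectively the minimum and maximum of $\sigma(\mathbf{c})$. An interval with minimum $\sigma(\hat{0})$ and maximum $\sigma(\hat{1})$ is exactly $[\sigma(\hat{0}),\sigma(\hat{1})]$, which is the claim. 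Combined with strict monotonicity from Lemma~\ref{lem:SigmaIncreasing}, this in fact shows $\sigma\vert_{\mathbf{c}}$ is an order isomorphism onto $[\sigma(\hat{0}),\sigma(\hat{1})]$ for every maximal chain $\mathbf{c}$, so (after an affine rescaling) $\sigma$ is an $R$-grading, completing the proof of Theorem~\ref{thm:MainExt}.

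There is essentially no obstacle left at this stage: all the genuine work has been done in Lemma~\ref{lem:RankDiffLipschitz}, and what remains is the bookkeeping of the factorization together with the elementary fact that a connected subset of $\rr$ containing its own extremal values is the closed interval between them. The only two things to be careful about are that $\hat{0},\hat{1}\in\mathbf{c}$ (covered by the standing boundedness assumption) and, if $\pm\infty\in R$, that the formula $\kappa\mapsto\kappa-\hat{\alpha}(\kappa)$ remains continuous there, which follows from $\hat{\alpha}$ being finite-valued and continuous.
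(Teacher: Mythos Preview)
Your proof is correct and follows essentially the same approach as the paper: both identify $\sigma\circ\rho\vert_{\mathbf{c}}^{-1}=\mathrm{id}_{R}-\hat{\alpha}$, invoke the continuity of $\hat{\alpha}$ from Lemma~\ref{lem:RankDiffLipschitz}, and then deduce surjectivity via connectedness of the image together with the presence of the endpoints $\sigma(\hat{0}),\sigma(\hat{1})$. Your version simply spells out a few more details (the role of Lemma~\ref{lem:SigmaIncreasing}, the care needed at $\pm\infty$) than the paper's two-line proof.
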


\begin{proof}
We have that $\sigma\circ\rho\vert_{\mathbf{c}}^{-1}=\mathrm{id}_{R}-\hat{\alpha}$,
yielding continuity. Now the image of $\sigma\vert_{\mathbf{c}}$
is a connected subset of $[\sigma(\hat{0}),\sigma(\hat{1})]$ that
contains both endpoints.
\end{proof}
Theorem~\ref{thm:Main} now follows from Lemma~\ref{lem:SigmaIncreasing}
and Corollary~\ref{cor:SigmaSurjective}.
\begin{example}
The chief chain may not be rank modular with respect to the new rank
function $\sigma$. Consider the measurable Boolean lattice $\mathcal{B}$
on $[0,2]$ with Lesbegue measure $\mu$. Let $\nu(U)$ be the integral
over $U$ of the function that is $1$ on $[0,1]$ and $2$ on $[1,2]$,
and let $A$ be the family of all sets of measure $1$ with respect
to $\nu$, an antichain cutset in $\mathcal{B}$. Since $\mathcal{B}$
is rank modular, any maximal chain is a chief chain. We take the chief
chain consisting of the intervals $[0,t]$ for $0\leq t\leq2$.

But now $\sigma([0,t])=\mu([0,t])-\mu([0,1])=t-1$, while $\sigma([1,2])=\mu([1,2])-\mu([1,3/2])=1/2$.
Now $\sigma([0,1])+\sigma([1,2])\neq\sigma([0,2])+\sigma(\emptyset)$.
Thus, the element $[0,1]$ is not rank modular with respect to $\sigma$!
\end{example}

\bibliographystyle{10_Users_russw_Documents_Research_mypapers_Antichain_cutsets_in_real-ranked_lattices_hamsplain}
\bibliography{9_Users_russw_Documents_Research_Master}

\end{document}